\newcommand{\RR}{\mathbb{R}}
\newcommand{\arrow}[1]{-{Stealth[length=3.5mm,line width=1pt, open,#1]}}
\numberwithin{figure}{section}
\newtheorem{theorem}{Theorem}[section]
\newtheorem{lemma}[theorem]{Lemma}
\newtheorem{definition}[theorem]{Definition}
\newtheorem{conjecture}[theorem]{Conjecture}
\newtheorem*{remark}{Remark}
\newtheorem{question}[theorem]{Question}
\newtheorem*{mainthm}{Main Theorem}
\tikzset{
	c/.style={every coordinate/.try}
}
\title[Towards Constructing Geodesic Nets with Four Boundary Vertices]{Towards Constructing Geodesic Nets with Four Boundary Vertices and an Increasing Number of Balanced Vertices}
\author{Fabian Parsch, Hanrui Zhang}
\address{Department of Mathematics, University of Toronto, 40 St. George Street, Toronto, ON M5S 2E4, Canada}
\email{fabian.parsch@utoronto.ca, hanrui.zhang@mail.utoronto.ca}
\begin{document}

\begin{abstract}
    We construct a geodesic net in the plane with four boundary (unbalanced) vertices that has 25 balanced vertices and that is irreducible, i.e. it does not contain nontrivial subnets.
    
    This net is novel and remarkable for several reasons: (1) It increases the previously known maximum for balanced vertices of nets of this kind from 16 to 25. (2) It is, to our knowledge, the first such net that includes balanced vertices whose incident edges are not exhibiting symmetries of any kind. (3) The approach taken in the construction is quite promising as it might have the potential for generalization. This would allow to construct a series of irreducible geodesic nets with four boundary vertices and an arbitrary number of balanced vertices, answering a conjecture that the number of balanced vertices is in fact unbounded for nets with four boundary vertices. This would stand in stark contrast to the previously proven theorem that for three boundary vertices, there can be at most one single balanced vertex.
\end{abstract}

\maketitle

\section{Introduction}
\label{sec:intro}

Given two points in a Riemannian manifold, any geodesic segment connecting the two is a critical point of the length functional on the space of curves connecting them. We can generalize this approach to consider embedded graphs connecting a given set of three or more \textit{boundary points}. The critical points of the length functional on such a space of embedded graphs are geodesic nets. For a graph to be a geodesic net, every edge must be a geodesic segment. Additionally, except for the boundary vertices (also called \textit{unbalanced vertices}), all other vertices of a geodesic net must be \textit{balanced vertices}, as described in the following combinatorial definition.

\begin{definition}\label{def:geodesicnet}
	Let $S$ be a finite set of points in a Riemannian manifold $M$. Given a connected, finite graph $G=(V,E)$ embedded in $M$ and a subset of vertices $S\subset V$, we call $G$ a \emph{geodesic net with boundary vertex set $S$} if:
	\begin{enumerate}[label=({\roman*}),nosep]
		\item Every edge in $E$ is a geodesic segment.
		\item At each of the remaining vertices $v\in V\setminus S$ (the ``balanced vertices'') the following balancing condition holds: The sum of all unit vectors in the tangent space $T_vM$ directed along all edges from $v$ to the opposite end of each edge is equal to zero.
	\end{enumerate}
\end{definition}

As such, geodesic nets are a generalization of geodesic segments (which can be viewed as one-edge geodesic nets), and also a 1-dimensional analog of minimal surfaces. Their study, both regarding classification and regarding quantitative questions, is an area with a wealth of unanswered questions and open conjectures (see \cite{Nabutovsky2019-iy} for an overview of several of them). One of these questions is due to Gromov (see \cite{Gromov2009-fh}, p.799, where he considers a more general concept of \textit{edge-extremal graphs}).

\begin{question}[M. Gromov]\label{question:gromov}
Can the number of balanced vertices of a geodesic net in the Euclidean plane be bounded above in terms of the number of unbalanced vertices?
\end{question}

Note that this question naturally assumes that we do not allow degree two balanced vertices, as such vertices could be added and removed along any geodesic segment at will. Furthermore, we do not allow weighted edges with integer multiplicity (see \cite{Nabutovsky2019-iy} for comparisons between geodesic nets and geodesic multinets), and we are focusing on connected geodesic nets.

While we will focus on the Euclidean plane further below, it is worth noting what happens in the context of other geometries. To that end, consider the following more general version of Question \ref{question:gromov}.

\begin{question}\label{question:geometries}
    For certain geometries, can the number of balanced vertices of a geodesic net be bounded above in terms of the number of unbalanced vertices?
\end{question}

One archetypical geometry to consider Question \ref{question:geometries} in is the round 2-sphere. In this case, the answer is ``no'' and rather trivially so: Combining an arbitrary number of great circles leads to geodesic nets with no unbalanced but an arbitrarily large number of balanced vertices (at the intersections of the great circles). On the other hand, there are other fascinating questions about the possible shapes of geodesic nets on the round sphere. Questions of existence of geodesic nets with certain shapes are considered in \cite{Hass1996-ov} and \cite{Heppes1999-oh}. In a more general setup, \cite{Adelstein2020-fc} considers bounds on the number of balanced vertices of certain geodesic nets in manifolds homeomorphic to the n-sphere.

Another archetype is the plane endowed with a Riemannian metric. If we require that the curvature is everywhere nonpositive, it is immediate that nets with zero, one or two unbalanced vertices have no balanced vertices (in the case of two unbalanced vertices, the only option is a geodesic segment connecting the two since we do not allow degree two balanced vertices). This makes the case of three unbalanced vertices the first of particular interest.

If three boundary vertices in the flat plane form a triangle with all interior angles less than $2\pi/3$, then we can add their Fermat point to get a \tikz[scale=0.3]{\draw(0,0)--(10:0.5);\draw(0,0)--(130:0.5);\draw(0,0)--(250:0.5);} shaped gedoesic net with a single balanced vertex. One set of conditions under which such a ``generalized Fermat point'' exists in non-flat metrics is considered in \cite{Nguyen2025-wu}.

As it turns out, one such balanced point is the maximal number for three boundary points on a plane with nonpositive curvature, as was shown in \cite{Parsch2018-nu}. In other words, in this particular situation the answer to Question \ref{question:geometries} is ``yes''.

\begin{theorem}\label{thm:three}
	Each geodesic net with three unbalanced vertices (of arbitrary degree) on the plane endowed with a Riemannian metric of non-positive curvature has at most one balanced vertex. (Furthermore, this statement is not true if we allow for positive curvature.)
\end{theorem}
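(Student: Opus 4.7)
The plan is to prove the first (non-positive curvature) statement by a Gauss--Bonnet angle-sum argument that exploits the Cartan--Hadamard structure of $M$, and the second statement by an explicit construction on the round 2-sphere. The key fact I would use repeatedly is that, at any balanced vertex $v$ of degree $k \geq 3$, the unit tangent vectors $u_1, \ldots, u_k$ along the edges satisfy $\sum_i u_i = 0$ and in particular cannot all lie in a closed half-plane of $T_vM$, which forces the edges to span an angular range strictly greater than $\pi$.

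First, I would localize the problem by showing that every balanced vertex lies in the interior of the geodesic convex hull of the three unbalanced vertices $A, B, C$. Since $M$ is Cartan--Hadamard, the squared-distance function $f(x) := \mathrm{dist}(x, p_0)^2$ from any point $p_0$ is convex along every geodesic. At a balanced vertex $v$, the sum of directional derivatives of $f$ along the outgoing edges equals $\langle \sum_i u_i, \nabla f(v) \rangle = 0$. Combined with convexity of $f$ along each edge, a standard maximum-principle argument rules out any balanced vertex outside the hull; a supporting-geodesic version of the same argument rules them out on the boundary as well.

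Now suppose for contradiction that there are at least two balanced vertices, and let $b$ denote their number so that $V = 3 + b$ with $b \geq 2$. Applying Euler's formula $V - E + F = 2$ to the planar embedding of the net and recalling that each bounded face is a geodesic polygon, Gauss--Bonnet together with $K \leq 0$ gives, for any face with $n_f$ corners, the interior angle sum bound $\sum \alpha_i \leq (n_f - 2)\pi$. Summing these inequalities over all bounded faces and using that the total angle around each balanced vertex equals $2\pi$, while at each unbalanced vertex it is bounded by the interior angle of the hull at that corner, I expect to obtain an angle-sum inequality that becomes inconsistent as soon as $b \geq 2$. The balance condition enters sharply: at a degree-three balanced vertex the three edges make angles exactly $2\pi/3$, and higher-degree balanced vertices impose analogous angular constraints that feed into the accounting.

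The hardest step is this final inequality: the naive Gauss--Bonnet/Euler bound is likely too weak on its own, and extracting a contradiction appears to require either a refined case analysis on the degrees of the balanced vertices and their incidences to the outer face, or an additional geometric input such as convexity of the edge-length functional under perturbations of a balanced vertex. For the second part of the theorem, I would exhibit an explicit counterexample on the round 2-sphere: the positive Gaussian curvature reverses the Gauss--Bonnet inequality to $\sum \alpha_i > (n_f - 2)\pi$, providing exactly the angular slack needed to place several balanced vertices among three boundary points, and a symmetric arrangement on a spherical triangle of appropriate size should realize this.
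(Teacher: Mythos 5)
First, note that this paper does not prove Theorem~\ref{thm:three} at all: it is quoted from \cite{Parsch2018-nu}, where the proof is a long and delicate structural analysis of the net, so there is no in-paper argument to compare against. Judged on its own, your proposal has a genuine gap at its central step. Your localization of balanced vertices to the convex hull of $A,B,C$ via convexity of the distance function in a Cartan--Hadamard plane is correct (it is essentially the standard ``maximum of a convex function on the net cannot occur at a balanced vertex'' argument, and a version of it does appear in \cite{Parsch2018-nu}), and the positive-curvature counterexample on the round sphere is also fine in spirit. But the heart of the theorem --- ruling out $b\geq 2$ --- is exactly the part you leave as ``I expect to obtain an angle-sum inequality that becomes inconsistent,'' and the proposed tool cannot deliver it. The Euler/Gauss--Bonnet face-counting argument is vacuous for tree-shaped nets ($F=1$, no bounded faces, nothing to sum over), yet trees with several balanced vertices are precisely among the configurations that must be excluded; the claimed maximum-principle localization does nothing to exclude them either. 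Moreover, the same counting applies verbatim to nets with four boundary vertices, where (as this very paper shows) $25$ balanced vertices are realizable in the flat plane, so no inequality derived purely from Euler's formula, $K\leq 0$, and the $2\pi$ angle count at balanced vertices can be sharp enough to stop at $b=1$ when there are three boundary vertices --- it would have to ``see'' the number of boundary vertices in a way this bookkeeping does not.

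You correctly flag that ``the naive Gauss--Bonnet/Euler bound is likely too weak on its own,'' but the refinements you gesture at (case analysis on degrees, convexity of length under perturbation) are not developed, and they are where all the difficulty lives. The actual proof in \cite{Parsch2018-nu} does not proceed by face counting; it analyzes the directions of edges at balanced vertices relative to the three boundary points (using that the unit vectors at a balanced vertex sum to zero and hence cannot lie in a closed half-plane, a fact you state but do not exploit beyond the localization step) and derives structural restrictions strong enough to force a single Fermat-type point. As written, your argument establishes the easy containment lemma and the counterexample direction, but not the theorem.
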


The natural follow-up scenario are geodesic nets with four boundary vertices. Very little is known with regards to the above question, even in the Euclidean plane. We will now focus on that geometry, i.e. we return to Question  \ref{question:gromov}.

As an example, consider the geodesic net shown on the left of Figure \ref{fig:geonet_4_27_reducible}, as constructed in \cite{Parsch2018-nu}. While it does demonstrate that a quite significant number of balanced vertices are possible, it isn't particularly complex as it is an overlay of seven simpler tree-shaped geodesic nets, as shown on the right of the figure.

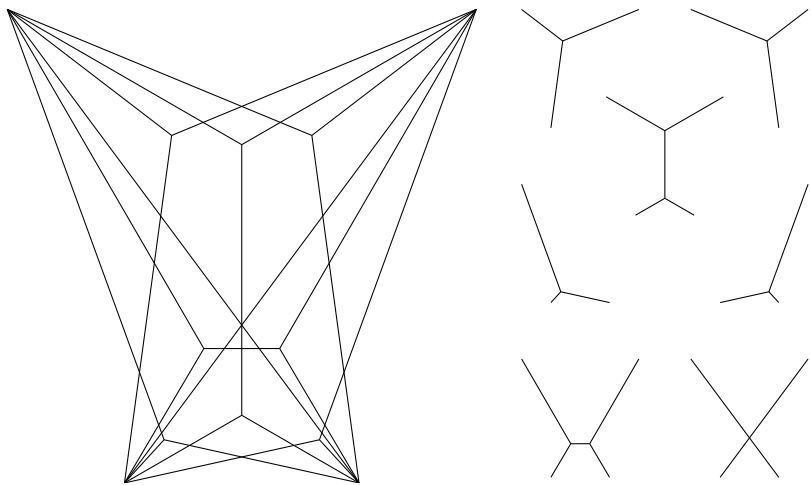
\begin{figure}[t]
  \centering
  
  \begin{tikzpicture}[scale=0.6]
	\clip (-5.5,1.5) rectangle (13,12);
	\node at (0,0) (Q) {};
	\node at (150:3) (X) {};
	\node at (125:3) (Y1) {};
	\node at (90:3) (Y2) {};
	\node at (55:3) (Y3) {};
	\node at (30:3) (Z) {};
	
	\node at (0,15) (P) {};
	\path (P) -- ++(210:6) node (A) {};
	\path (P) -- ++(255:6) node (B1) {};
	\path (P) -- ++(270:6) node (B2) {};
	\path (P) -- ++(285:6) node (B3) {};
	\path (P) -- ++(330:6) node (C) {};
	
	\node at (-.84,4.48) (L) {};
	\node at (.84,4.48) (N) {};

	\draw (A.center) -- (B1.center) -- (C.center);
	\draw (A.center) -- (B2.center) -- (C.center);
	\draw (A.center) -- (B3.center) -- (C.center);
	
	\draw (X.center) -- (Y1.center) -- (Z.center);
	\draw (X.center) -- (Y2.center) -- (Z.center);
	\draw (X.center) -- (Y3.center) -- (Z.center);
	
	\draw (B1.center) -- (X.center);
	\draw (B3.center) -- (Z.center);
	
	\draw (Y1.center) -- (A.center);
	\draw (Y3.center) -- (C.center);
	
	\draw (B2.center) -- (Y2.center);
	
	\draw (A.center) -- (L.center);
	\draw (X.center) -- (L.center);
	\draw (Z.center) -- (N.center);
	\draw (C.center) -- (N.center);
	\draw (L.center) -- (N.center);
	
	\draw (A.center) -- (Z.center);
	\draw (C.center) -- (X.center);
	\begin{scope}[scale=0.25, yshift=36cm, xshift=30cm]
	\node at (0,0) (Q) {};
	\node at (150:3) (X) {};
	\node at (125:3) (Y1) {};
	\node at (90:3) (Y2) {};
	\node at (55:3) (Y3) {};
	\node at (30:3) (Z) {};
	
	\node at (0,15) (P) {};
	\path (P) -- ++(210:6) node (A) {};
	\path (P) -- ++(255:6) node (B1) {};
	\path (P) -- ++(270:6) node (B2) {};
	\path (P) -- ++(285:6) node (B3) {};
	\path (P) -- ++(330:6) node (C) {};
	
	\node at (-.84,4.48) (L) {};
	\node at (.84,4.48) (N) {};

	\draw (A.center) -- (B1.center) -- (C.center);
	
	\draw (B1.center) -- (X.center);
	\end{scope}
	\begin{scope}[scale=0.25, yshift=36cm, xshift=45cm]
	\node at (0,0) (Q) {};
	\node at (150:3) (X) {};
	\node at (125:3) (Y1) {};
	\node at (90:3) (Y2) {};
	\node at (55:3) (Y3) {};
	\node at (30:3) (Z) {};
	
	\node at (0,15) (P) {};
	\path (P) -- ++(210:6) node (A) {};
	\path (P) -- ++(255:6) node (B1) {};
	\path (P) -- ++(270:6) node (B2) {};
	\path (P) -- ++(285:6) node (B3) {};
	\path (P) -- ++(330:6) node (C) {};
	
	\node at (-.84,4.48) (L) {};
	\node at (.84,4.48) (N) {};

	\draw (A.center) -- (B3.center) -- (C.center);
	
	\draw (B3.center) -- (Z.center);
	\end{scope}
	\begin{scope}[scale=0.25, yshift=28.25cm, xshift=37.5cm]
	\node at (0,0) (Q) {};
	\node at (150:3) (X) {};
	\node at (125:3) (Y1) {};
	\node at (90:3) (Y2) {};
	\node at (55:3) (Y3) {};
	\node at (30:3) (Z) {};
	
	\node at (0,15) (P) {};
	\path (P) -- ++(210:6) node (A) {};
	\path (P) -- ++(255:6) node (B1) {};
	\path (P) -- ++(270:6) node (B2) {};
	\path (P) -- ++(285:6) node (B3) {};
	\path (P) -- ++(330:6) node (C) {};
	
	\node at (-.84,4.48) (L) {};
	\node at (.84,4.48) (N) {};

	\draw (A.center) -- (B2.center) -- (C.center);
	
	\draw (X.center) -- (Y2.center) -- (Z.center);
	
	\draw (B2.center) -- (Y2.center);
	\end{scope}
	
	\begin{scope}[scale=0.25, yshift=20.5cm, xshift=30cm]
	\node at (0,0) (Q) {};
	\node at (150:3) (X) {};
	\node at (125:3) (Y1) {};
	\node at (90:3) (Y2) {};
	\node at (55:3) (Y3) {};
	\node at (30:3) (Z) {};
	
	\node at (0,15) (P) {};
	\path (P) -- ++(210:6) node (A) {};
	\path (P) -- ++(255:6) node (B1) {};
	\path (P) -- ++(270:6) node (B2) {};
	\path (P) -- ++(285:6) node (B3) {};
	\path (P) -- ++(330:6) node (C) {};
	
	\node at (-.84,4.48) (L) {};
	\node at (.84,4.48) (N) {};

	\draw (X.center) -- (Y1.center) -- (Z.center);
	
	\draw (Y1.center) -- (A.center);
	\end{scope}
	\begin{scope}[scale=0.25, yshift=20.5cm, xshift=45cm]
	\node at (0,0) (Q) {};
	\node at (150:3) (X) {};
	\node at (125:3) (Y1) {};
	\node at (90:3) (Y2) {};
	\node at (55:3) (Y3) {};
	\node at (30:3) (Z) {};
	
	\node at (0,15) (P) {};
	\path (P) -- ++(210:6) node (A) {};
	\path (P) -- ++(255:6) node (B1) {};
	\path (P) -- ++(270:6) node (B2) {};
	\path (P) -- ++(285:6) node (B3) {};
	\path (P) -- ++(330:6) node (C) {};
	
	\node at (-.84,4.48) (L) {};
	\node at (.84,4.48) (N) {};

	\draw (X.center) -- (Y3.center) -- (Z.center);
	
	\draw (Y3.center) -- (C.center);
	\end{scope}
	
	\begin{scope}[scale=0.25, yshift=5cm, xshift=30cm]
	\node at (0,0) (Q) {};
	\node at (150:3) (X) {};
	\node at (125:3) (Y1) {};
	\node at (90:3) (Y2) {};
	\node at (55:3) (Y3) {};
	\node at (30:3) (Z) {};
	
	\node at (0,15) (P) {};
	\path (P) -- ++(210:6) node (A) {};
	\path (P) -- ++(255:6) node (B1) {};
	\path (P) -- ++(270:6) node (B2) {};
	\path (P) -- ++(285:6) node (B3) {};
	\path (P) -- ++(330:6) node (C) {};
	
	\node at (-.84,4.48) (L) {};
	\node at (.84,4.48) (N) {};

	\draw (A.center) -- (L.center);
	\draw (X.center) -- (L.center);
	\draw (Z.center) -- (N.center);
	\draw (C.center) -- (N.center);
	\draw (L.center) -- (N.center);
	\end{scope}
	\begin{scope}[scale=0.25, yshift=5cm, xshift=45cm]
	\node at (0,0) (Q) {};
	\node at (150:3) (X) {};
	\node at (125:3) (Y1) {};
	\node at (90:3) (Y2) {};
	\node at (55:3) (Y3) {};
	\node at (30:3) (Z) {};
	
	\node at (0,15) (P) {};
	\path (P) -- ++(210:6) node (A) {};
	\path (P) -- ++(255:6) node (B1) {};
	\path (P) -- ++(270:6) node (B2) {};
	\path (P) -- ++(285:6) node (B3) {};
	\path (P) -- ++(330:6) node (C) {};
		
	\draw (A.center) -- (Z.center);
	\draw (C.center) -- (X.center);	
	\end{scope}

\end{tikzpicture}

\caption{Example of a geodesic net with 4 boundary vertices and 27 balanced vertices on the Euclidean plane with the boundary vertices highlighted, as constructed in Section 7 of \cite{Parsch2018-nu}. Despite its seemingly complex structure, it is just an overlay of 7 simpler nets as seen on the right.}
\label{fig:geonet_4_27_reducible}
\end{figure}

This observation motivated the definition of \textit{irreducible} geodesic nets in \cite{Parsch2019-fz}.

\begin{definition}
	\begin{enumerate}[label=({\roman*}),nosep]
		\item Given two geodesic nets $G'$ and $G$, we say that $G'$ is a \textit{subnet} of $G$ if $G'$ is a subgraph of $G$ and the boundary vertex set of $G'$ is a subset of the boundary vertex set of $G$.
		\item A geodesic net $G$ is \emph{irreducible} if for any geodesic net $G'$:
		\begin{align*}
			G'\text{ is a subnet of }G\quad\Longrightarrow\quad G'=G\quad\text{or}\quad G'\text{ consists of a single point}
		\end{align*}
	\end{enumerate}
\end{definition}
In other words, a geodesic net is irreducible if it has only trivial subnets. This means, for example, that the net on the left of Figure \ref{fig:geonet_4_27_reducible} is not irreducible, but each of the nets on the right is.

\cite{Parsch2019-fz} provides the construction of an irreducible geodesic net with four boundary vertices that isn't just a tree and has 16 balanced vertices (see the top of Figure \ref{fig:twoirreduciblenets}). It also includes the remark that it is ``tempting to conjecture that [the] example [constructed there] is one of a series of similar examples with arbitrary large number of balanced vertices''. This is alluding to Conjecture 3.2.2 in \cite{Nabutovsky2019-iy}, namely:

\begin{conjecture}\label{conj:fourandinfinite}
There exist geodesic nets in the Euclidean plane with 4 unbalanced vertices and an arbitrarily large number of balanced vertices. (Moreover, we will not be surprised if this assertion is already true in the case when the set of unbalanced vertices coincides with the set of vertices of a square).	
\end{conjecture}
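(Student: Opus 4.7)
The plan is to exhibit, for each positive integer $n$, an irreducible geodesic net $G_n$ in the Euclidean plane with unbalanced vertex set equal to the four corners of a fixed square and with $N(n)\to\infty$ balanced vertices as $n\to\infty$. The strategy is to isolate inside the 25-balanced-vertex example referenced in the abstract a combinatorial ``module'' $M$ that admits repetition, and to show that the resulting extended nets can still be balanced and remain irreducible.

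The first step is to examine the graph-theoretic skeleton of the 25-vertex net and locate a subgraph $M$ and two pairs of interface edges such that substituting a chain of $n$ glued copies $M^n$ for $M$ yields a well-defined planar combinatorial type. Fixing the four boundary vertices at the corners of the unit square and imposing the reflection symmetry across one diagonal (which is natural for the constructions of this kind and is compatible with the module chain) cuts the count of independent unknowns and equations roughly in half, which is useful but not strictly necessary for the argument.

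Next, I would set up the balancing conditions for $G_n$ as a system of polynomial-plus-norm equations in the coordinates of the balanced vertices. In the chain model each added copy of $M$ contributes a fixed number $k$ of new unknowns and $k$ new equations, so the systems $F_n(x)=0$ have matching dimensions as $n$ grows. Existence of solutions would then be established by induction: starting from a solution of $F_{n_0}=0$ obtained from the 25-vertex net, I would apply the implicit function theorem to continue it to $F_{n_0+1}=0$ and beyond, provided the Jacobian of $F_n$ at the known solution is invertible. As a robustness check, I would run in parallel a variational approach: fix the combinatorial type of $G_n$ and minimize total edge length over immersions of this graph with the four corners pinned; standard arguments show that a minimizer is a geodesic net with every interior vertex balanced, and edge nondegeneracy can be ensured if the combinatorial type admits any positive-length realization at all.

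The principal obstacle is precisely the joint control of existence and nondegeneracy for all $n$: the balancing map is highly nonlinear, is invariant under global isometries of the plane, and its Jacobian could degenerate along the sequence, causing the continuation to stall or the variational minimizer to collapse an edge. A secondary obstacle, but an essential one for the conjecture as stated, is irreducibility. One must rule out that each $G_n$ decomposes as an overlay of nontrivial subnets whose boundary vertex sets are contained in the four corners. For this I would prove a combinatorial primality lemma for the graph of $G_n$: no proper nonempty subset of its edges can itself support a balanced configuration on the same boundary set. If $M$ is chosen so that this primality holds for the base case and is preserved under chaining, then irreducibility of $G_n$ follows for every $n$, and the conjecture is established. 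I expect the delicate engineering of the module $M$ so that both primality and nondegeneracy persist under arbitrary repetition to be the most substantial remaining step.
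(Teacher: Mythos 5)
This statement is a \emph{conjecture} (Conjecture~\ref{conj:fourandinfinite}), and the paper does not prove it; it only constructs a single irreducible geodesic net with four boundary vertices and 25 balanced vertices as a step toward it, and explicitly reports in Section~\ref{section:outro} that attempts to iterate the construction have so far converged only to multinets. Your proposal, likewise, is a research program rather than a proof: every load-bearing step is stated as something you ``would'' do, and the two obstacles you yourself identify --- nondegeneracy of the Jacobian along the whole sequence, and preservation of irreducibility/primality under chaining --- are exactly the open content of the conjecture. No module $M$ is exhibited, no interface edges are specified, and no argument is given that the chained combinatorial type admits even one balanced realization beyond the base case. Asserting that the implicit function theorem ``would'' continue solutions provided the Jacobian is invertible is not progress unless you can verify that invertibility, which you do not.

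Two of your steps are moreover actively problematic. First, the variational ``robustness check'' does not work as stated: a length minimizer over graphs spanning four pinned points with a prescribed combinatorial type will generically collapse edges and degenerate toward a Steiner tree, which for four terminals has at most two balanced vertices; geodesic nets of the complexity at issue here are critical points of length but emphatically not minimizers, so minimization cannot certify existence of the desired net. Second, you do not address the multiplicity issue: even if a balanced configuration of the chained type exists, distinct combinatorial edges may coincide geometrically, producing a weighted multinet, which is excluded by the conventions surrounding Question~\ref{question:gromov}; the paper reports that precisely this failure mode occurred in its own generalization attempts. Until an explicit module with verified nondegeneracy, verified edge-distinctness, and verified primality under iteration is produced, the conjecture remains open and your outline does not close the gap.
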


In other words, this conjecture claims that the answer to Question \ref{question:gromov} is ``No'' for four or more unbalanced vertices\footnote{It is worth noting that \cite{Nabutovsky2019-iy} also mentions a different conjecture that it is possible to bound the number of balanced vertices in terms of the number of unbalanced vertices \textit{and} an additional quantity called the \textit{total imbalance}. For more details, including the definition of \textit{total imbalance}, see that article.}. This would stand in stark contrast to the bound for three unbalanced vertices in Theorem \ref{thm:three}.

Note that Conjecture \ref{conj:fourandinfinite} doesn't require the respective nets to be irreducible. But given that irreducible nets are the ``building blocks'' of any possible net, their study is particularly promising when it comes to finding such a series of nets.

Seeing that the example in \cite{Parsch2019-fz} has four boundary vertices and 16 balanced vertices, the following question is posed in Section 3.2 of \cite{Nabutovsky2019-iy} in the sense of a ``stepping stone'' to proving Conjecture \ref{conj:fourandinfinite}.

\begin{question}
Is there an irreducible geodesic net with 4 unbalanced vertices in the Euclidean plane with more than 16 balanced vertices? If not, prove that such a geodesic net does not exist.
\end{question}

The main purpose of the present paper is to show that the answer to this question is yes.

\begin{mainthm}
    There exists an irreducible geodesic net with 4 boundary vertices and 25 balanced vertices. 
\end{mainthm}

\begin{figure}[htbp]

	\flushright
 \begin{tikzpicture}[scale=1.4]	
		\draw (0:1) coordinate (a1) -- (45:1.12) coordinate (b1) -- (90:1) coordinate (a2) -- (135:1.12) coordinate (b2) -- (180:1) coordinate (a3) -- (225:1.12) coordinate (b3) -- (270:1) coordinate (a4) -- (315:1.12) coordinate (b4) -- cycle;
		
		\fill (0:{tan(76)}) coordinate (c1) circle (0.05);
		\fill (90:{tan(76)}) coordinate (c2) circle (0.05);
		\fill (180:{tan(76)}) coordinate (c3) circle (0.05);
		\fill (270:{tan(76)}) coordinate (c4) circle (0.05);
		
		\draw (a1) -- (c2) -- (a3) -- (c4) -- (a1);
		\draw (a2) -- (c3) -- (a4) -- (c1) -- (a2);
		
		\draw (a1) -- (c1);
		\draw (a2) -- (c2);
		\draw (a3) -- (c3);
		\draw (a4) -- (c4);
		
		\draw (b1) -- ++ (45:0.07) coordinate (d1);
		\draw (c1) -- (d1) -- (c2);
		
		\draw (b2) -- ++ (135:0.07) coordinate (d2);
		\draw (c2) -- (d2) -- (c3);
		
		\draw (b3) -- ++ (225:0.07) coordinate (d3);
		\draw (c3) -- (d3) -- (c4);
		
		\draw (b4) -- ++ (315:0.07) coordinate (d4);
		\draw (c4) -- (d4) -- (c1);
		
		\draw [dashed] (b1) circle (0.3);
		\draw [dashed] (a1) circle (0.3);
		
		\begin{scope}[every coordinate/.style={shift={(-5.5,-5.5)},scale=10}]
			\begin{scope}
			\clip ([c]b1) circle (1.2);
			\draw ([c]a1) -- ([c]b1) -- ([c]a2);
			\draw ([c]a1) -- ([c]c2);
			\draw ([c]a2) -- ([c]c1);
			\draw ([c]b1) -- ([c]d1);
			\draw ([c]c1) -- ([c]d1) -- ([c]c2);
			\end{scope}
			\draw [dashed] ([c]b1) circle (1.2);
		\end{scope}
		
		\begin{scope}[every coordinate/.style={shift={(-7.5,-2.5)},scale=10}]
			\begin{scope}
			\clip ([c]a1) circle (1.2);
			\draw ([c]a1) -- ++ (120:3);
			\draw ([c]a1) -- ++ (-120:3);
			\draw ([c]a1) -- ++ (105:3);
			\draw ([c]a1) -- ++ (-105:3);
			\draw ([c]a1) -- ([c]c1);
			\path ([c]a1) -- ++(0:0.5) coordinate (arcstart1);
			\path ([c]a1) -- ++(0:1) coordinate (arcstart2);
			\end{scope}
			\path ([c]a1) -- ++(315:0.65) node [text width=2cm,align=center] {\scriptsize some angles exaggerated\par};
			\draw [dashed] ([c]a1) circle (1.2);
		\end{scope}

	\end{tikzpicture}
	
	\flushleft
	\vspace{-30mm}
    \begin{tikzpicture}[scale=0.8]
\draw (0,0)-- (0.753334985772626,0);
\draw (0.753334985772626,0)-- (1.7192608120616921,0.2588190451025174);
\draw (1.7192608120616921,0.2588190451025174)-- (1.9780798571642149,1.2247448713915854);
\draw (1.9780798571642149,1.2247448713915854)-- (1.9780798571642162,1.9780798571642115);
\draw (1.9780798571642162,1.9780798571642115)-- (1.7192608120616968,2.9440056834532804);
\draw (1.7192608120616968,2.9440056834532804)-- (0.7533349857726285,3.2028247285558025);
\draw (0.7533349857726285,3.2028247285558025)-- (0,3.2028247285558034);
\draw (0,3.2028247285558034)-- (-0.9659258262890664,2.9440056834532835);
\draw (-0.9659258262890664,2.9440056834532835)-- (-1.224744871391588,1.9780798571642153);
\draw (-1.224744871391588,1.9780798571642153)-- (-1.2247448713915885,1.2247448713915894);
\draw (-1.2247448713915885,1.2247448713915894)-- (-0.9659258262890682,0.258819045102521);
\draw (-0.9659258262890682,0.258819045102521)-- (0,0);
\draw (-1.224744871391588,1.9780798571642153)-- (-1.0072757929786942,1.6014123642779021);
\draw (-1.2247448713915885,1.2247448713915894)-- (-1.0072757929786942,1.6014123642779021);
\draw (-1.0072757929786942,1.6014123642779021)-- (0.3766674928863143,1.6014123642779008);
\draw (-6.516205728898051,1.6014123642779023)-- (-1.224744871391588,1.9780798571642153);
\draw (-6.516205728898051,1.6014123642779023)-- (-1.2247448713915885,1.2247448713915894);
\draw (0,0)-- (0.376667492886313,-5.291460857506463);
\draw (0.753334985772626,0)-- (0.376667492886313,-5.291460857506463);
\draw (-0.9831319305486067,0.24161294084298168)-- (-0.9659258262890682,0.258819045102521);
\draw (-1.0799680129622857,0.14477685842930255)-- (-0.9831319305486067,0.24161294084298168);
\draw (0.376667492886313,-5.291460857506463)-- (-1.0799680129622857,0.14477685842930255);
\draw (0.376667492886313,-5.291460857506463)-- (-0.9831319305486067,0.24161294084298168);
\draw (-6.516205728898051,1.6014123642779023)-- (-1.0799680129622857,0.14477685842930255);
\draw (-6.516205728898051,1.6014123642779023)-- (-0.9831319305486067,0.24161294084298168);
\draw (-0.9831319305486067,0.24161294084298168)-- (-1.2247448713915885,1.2247448713915894);
\draw (0,0)-- (-0.9831319305486067,0.24161294084298168);
\draw (7.269540714670677,1.6014123642778986)-- (1.9780798571642162,1.9780798571642115);
\draw (7.269540714670677,1.6014123642778986)-- (1.9780798571642149,1.2247448713915854);
\draw (0.3766674928863153,8.494285586062267)-- (0,3.2028247285558034);
\draw (0.3766674928863153,8.494285586062267)-- (0.7533349857726285,3.2028247285558025);
\draw (0.3766674928863153,8.494285586062267)-- (-0.9831319305486066,2.9612117877128212);
\draw (-6.516205728898051,1.6014123642779023)-- (-0.9831319305486066,2.9612117877128212);
\draw (-1.224744871391588,1.9780798571642153)-- (-0.9831319305486066,2.9612117877128212);
\draw (-0.9831319305486066,2.9612117877128212)-- (-0.9659258262890664,2.9440056834532835);
\draw (-0.9831319305486066,2.9612117877128212)-- (0,3.2028247285558034);
\draw (-1.079968012962285,3.0580478701265017)-- (-0.9831319305486066,2.9612117877128212);
\draw (0.3766674928863153,8.494285586062267)-- (-1.079968012962285,3.0580478701265017);
\draw (-1.079968012962285,3.0580478701265017)-- (-6.516205728898051,1.6014123642779023);
\draw (1.7192608120616968,2.9440056834532804)-- (1.7364669163212336,2.961211787712821);
\draw (0.7533349857726285,3.2028247285558025)-- (1.7364669163212336,2.961211787712821);
\draw (0.3766674928863153,8.494285586062267)-- (1.7364669163212336,2.961211787712821);
\draw (1.7364669163212336,2.961211787712821)-- (1.9780798571642162,1.9780798571642115);
\draw (1.7364669163212336,2.961211787712821)-- (7.269540714670677,1.6014123642778986);
\draw (1.8333029987349136,3.0580478701265)-- (1.7364669163212336,2.961211787712821);
\draw (0.3766674928863153,8.494285586062267)-- (1.8333029987349136,3.0580478701265);
\draw (1.8333029987349136,3.0580478701265)-- (7.269540714670677,1.6014123642778986);
\draw (1.7364669163212336,0.24161294084298132)-- (1.7192608120616921,0.2588190451025174);
\draw (0.376667492886313,-5.291460857506463)-- (1.7364669163212336,0.24161294084298132);
\draw (1.7364669163212336,0.24161294084298132)-- (7.269540714670677,1.6014123642778986);
\draw (1.7364669163212336,0.24161294084298132)-- (1.9780798571642149,1.2247448713915854);
\draw (1.7364669163212336,0.24161294084298132)-- (0.753334985772626,0);
\draw (1.8333029987349128,0.14477685842930033)-- (1.7364669163212336,0.24161294084298132);
\draw (0.376667492886313,-5.291460857506463)-- (1.8333029987349128,0.14477685842930033);
\draw (1.8333029987349128,0.14477685842930033)-- (7.269540714670677,1.6014123642778986);
\draw (0,3.2028247285558034)-- (0.37666749288631535,2.9853556501429086);
\draw (0.37666749288631535,2.9853556501429086)-- (0.7533349857726285,3.2028247285558025);
\draw (0.37666749288631535,2.9853556501429086)-- (0.3766674928863143,1.6014123642779008);
\draw (1.9780798571642162,1.9780798571642115)-- (1.7606107787513212,1.6014123642778983);
\draw (1.7606107787513212,1.6014123642778983)-- (1.9780798571642149,1.2247448713915854);
\draw (1.7606107787513212,1.6014123642778983)-- (0.3766674928863143,1.6014123642779008);
\draw (0,0)-- (0.376667492886313,0.21746907841289428);
\draw (0.376667492886313,0.21746907841289428)-- (0.753334985772626,0);
\draw (0.376667492886313,0.21746907841289428)-- (0.3766674928863143,1.6014123642779008);
\draw [fill=black] (-6.516205728898051,1.6014123642779023) circle (2.5pt);
\draw [fill=black] (0.376667492886313,-5.291460857506463) circle (2.5pt);
\draw [fill=black] (7.269540714670677,1.6014123642778986) circle (2.5pt);
\draw [fill=black] (0.3766674928863153,8.494285586062267) circle (2.5pt);

\draw[dashed] (-0.96,2.94) circle (0.5);

\begin{scope}[xshift=-0.5cm,yshift=-7.5cm,scale=4.5]
\draw[dashed] (-0.96,2.94) circle (0.5);
\begin{scope}
\clip (-0.96,2.94) circle (0.5);
\draw (0,3.2028247285558034)-- (-0.9659258262890664,2.9440056834532835);
\draw (-0.9659258262890664,2.9440056834532835)-- (-1.224744871391588,1.9780798571642153);
\draw (0.3766674928863153,8.494285586062267)-- (-0.9831319305486066,2.9612117877128212);
\draw (-6.516205728898051,1.6014123642779023)-- (-0.9831319305486066,2.9612117877128212);
\draw (-1.224744871391588,1.9780798571642153)-- (-0.9831319305486066,2.9612117877128212);
\draw (-0.9831319305486066,2.9612117877128212)-- (-0.9659258262890664,2.9440056834532835);
\draw (-0.9831319305486066,2.9612117877128212)-- (0,3.2028247285558034);
\draw (-1.079968012962285,3.0580478701265017)-- (-0.9831319305486066,2.9612117877128212);
\draw (0.3766674928863153,8.494285586062267)-- (-1.079968012962285,3.0580478701265017);
\draw (-1.079968012962285,3.0580478701265017)-- (-6.516205728898051,1.6014123642779023);
\end{scope}
\end{scope}

\draw[dashed] (-1.22,1.60) circle (0.7);

\begin{scope}[xshift=0.8cm,yshift=-10cm,scale=4.5]
\draw[dashed] (-1.22,1.60) circle (0.5);
\begin{scope}
\clip (-1.22,1.60) circle (0.5);
\draw (-1.224744871391588,1.9780798571642153) -- ++(60:1);
\draw (-1.224744871391588,1.9780798571642153)-- (-1.2247448713915885,1.2247448713915894);
\draw (-1.2247448713915885,1.2247448713915894)-- ++(-60:1);
\draw (-1.224744871391588,1.9780798571642153)-- (-1.0072757929786942,1.6014123642779021);
\draw (-1.2247448713915885,1.2247448713915894)-- (-1.0072757929786942,1.6014123642779021);
\draw (-1.0072757929786942,1.6014123642779021)-- (0.3766674928863143,1.6014123642779008);
\draw (-6.516205728898051,1.6014123642779023)-- (-1.224744871391588,1.9780798571642153);
\draw (-6.516205728898051,1.6014123642779023)-- (-1.2247448713915885,1.2247448713915894);
\draw (-0.9831319305486067,0.24161294084298168)-- (-1.2247448713915885,1.2247448713915894);
\draw (-1.224744871391588,1.9780798571642153)-- (-0.9831319305486066,2.9612117877128212);
\node at (-1.45,1.60) [text width=2cm,align=center] {\scriptsize some angles exaggerated\par};
\end{scope}
\end{scope}
\end{tikzpicture}
    \caption{Two irreducible geodesic nets with 4 boundary vertices. At the top, the example with 16 balanced vertices, constructed in \cite{Parsch2019-fz}. At the bottom, the example with 25 balanced vertices that we construct in the present paper. Note that in both cases, some edges may seem to coincide due to angles being extremely small. This is why some angles are slightly exaggerated in the zoom-ins.}
    \label{fig:twoirreduciblenets}
\end{figure}
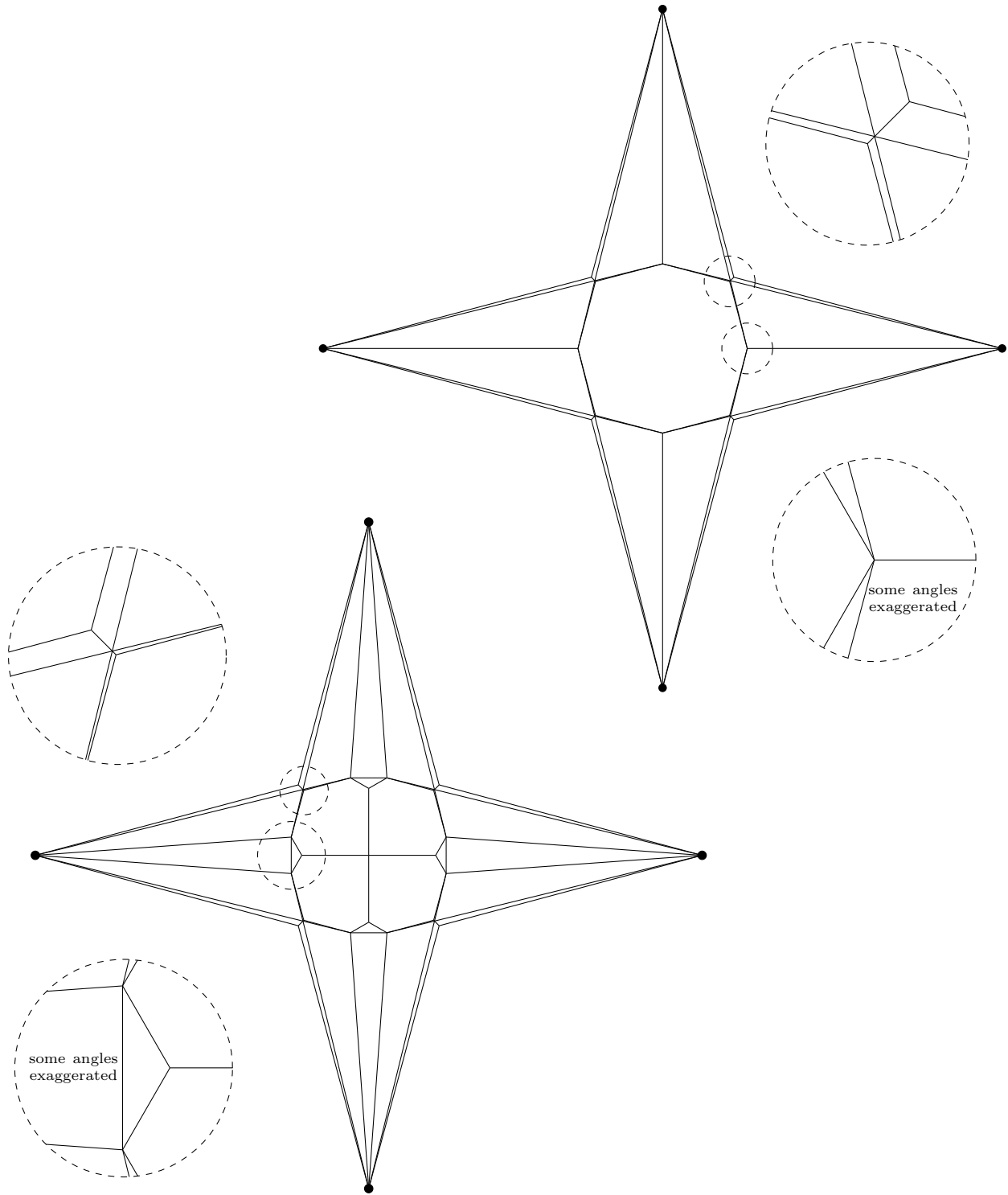

This geodesic net can be seen at the bottom of Figure \ref{fig:twoirreduciblenets}. We will provide its construction in Section \ref{section:construction}, and then prove that it fulfills the conditions of the Main Theorem. There are three important but technical lemmas that we will use in the process. For better readability of the main construction, their proofs are deferred and can be found separately in Section \ref{section:proof}.

By comparing the two nets in Figure \ref{fig:twoirreduciblenets}, namely the one previously constructed in \cite{Parsch2019-fz} and the one constructed below, it is not hard to notice that these nets have a very similar structure, as if they are the first two steps in a series of nets. We consider this the second noteworthy result of this paper.

More specifically, the genesis of the net constructed below provides interesting approaches and methods that might be useful in the quest for finding a series of geodesic nets with four boundary vertices and an ever increasing number of balanced vertices (i.e. an answer to Conjecture \ref{conj:fourandinfinite}). We will take a closer look at this in Section \ref{section:outro} in the hope that it might inform future research on the topic.

\section{Construction of the Net}\label{section:construction}

\subsection{Conventions}

The net will be symmetric under rotation by $\pi/2$, and also symmetric under reflection along the horizontal, vertical, and the two diagonal lines. We will make heavy use of these symmetries in the arguments, sometimes without explicit reference.

The construction of the net will use indices $i\in\{1,2,3,4\}$. In the spirit of the rotational symmetry, we will use cyclical indexing modulo 4, e.g. for an index $i=4$, we have $i+1=1$.

For two points $p,q\in\RR^2$, $\overline{pq}$ denotes the straight line segment between $p$ and $q$ whereas $d(p,q)$ denotes their Euclidean distance. Finally, $\angle pqr$ denotes the counterclockwise angle from $p$ to $r$ at $q$.

\subsection{Overview}

The geodesic net that is about to be constructed is shown in its entirety at the bottom of Figure \ref{fig:twoirreduciblenets}. All vertices except for the four labelled boundary vertices are balanced.

\subsection{The Two Angles \texorpdfstring{$\alpha$}{alpha} and \texorpdfstring{$\beta$}{beta}}

The construction of the net will use specific angles $\alpha$ and $\beta$. These two angles are chosen based on the following lemma.

\begin{lemma}
    \label{lemma:alpha_beta_existence}
    There is a unique solution $(\alpha,\beta)$ to the following system of equations under the constraint that $\alpha\in (\pi, \frac{13\pi}{12})$ and $\beta\in (0, \frac{\pi}{2})$:
    \begin{align*}
        1 + \cos \beta + \cos\alpha + \cos\frac{13\pi}{12} + \cos\frac{11\pi}{6} =&\; 0,\\
        \sin \beta + \sin\alpha + \sin\frac{13\pi}{12} + \sin\frac{11\pi}{6} =&\; 0,
    \end{align*}
\end{lemma}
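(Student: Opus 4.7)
The plan is to reformulate the pair of scalar equations as a single complex identity and then analyze it geometrically. Setting
\[
w \;=\; -1 - \cos\tfrac{13\pi}{12} - \cos\tfrac{11\pi}{6} \;-\; i\bigl(\sin\tfrac{13\pi}{12} + \sin\tfrac{11\pi}{6}\bigr),
\]
the two equations are exactly the real and imaginary parts of $e^{i\alpha} + e^{i\beta} = w$. Using $\cos\tfrac{13\pi}{12}=-\cos\tfrac{\pi}{12}$, $\sin\tfrac{13\pi}{12}=-\sin\tfrac{\pi}{12}$, $\cos\tfrac{11\pi}{6}=\tfrac{\sqrt{3}}{2}$, $\sin\tfrac{11\pi}{6}=-\tfrac12$, the real part of $w$ simplifies to $-1+\cos\tfrac{\pi}{12}-\tfrac{\sqrt{3}}{2}$, which lies in $(-1,0)$ because $\tfrac{\sqrt{3}}{2} < \cos\tfrac{\pi}{12} < 1$; the imaginary part simplifies to $\sin\tfrac{\pi}{12}+\tfrac12$, which lies in $(0,1)$ because $\sin\tfrac{\pi}{12} < \sin\tfrac{\pi}{6} = \tfrac12$. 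In particular $|w|^2 < 2$, so certainly $|w|<2$.

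Next, I would apply the trigonometric identity $e^{i\alpha}+e^{i\beta} = 2\cos\!\tfrac{\alpha-\beta}{2}\,e^{i(\alpha+\beta)/2}$ to rewrite the complex equation, in the variables $\mu=(\alpha+\beta)/2$ and $\nu=(\alpha-\beta)/2$, as $2\cos\nu \cdot e^{i\mu} = w$. Since $|w|/2 < 1$ this is solvable, and all solutions are given (modulo $2\pi$) by $\mu=\arg(w)$ together with $|\nu|=\arccos(|w|/2)$; the two sign choices for $\nu$ merely swap $\alpha$ and $\beta$, so exactly one unordered pair of unit vectors $\{e^{i\alpha}, e^{i\beta}\}$ solves the equation.

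For uniqueness of the ordered pair under the range constraints, I would use that the two intervals $(\pi, 13\pi/12)$ and $(0, \pi/2)$ are disjoint subsets of $[0,2\pi)$. Thus each of the two unit vectors in the unordered pair corresponds to a unique argument in $[0,2\pi)$, and the labeling $\alpha$ versus $\beta$ is forced by which of these intervals contains the argument. It then remains to verify that one of the two arguments does lie in $(\pi, 13\pi/12)$ and the other in $(0, \pi/2)$. I expect this last verification to be the main obstacle: the window $(\pi, 13\pi/12)$ has width only $\pi/12$, so the check is quite tight and must be carried out by an explicit numerical estimate of $\arg(w)\pm\arccos(|w|/2)$, or equivalently by exhibiting a candidate $(\alpha,\beta)$ and substituting it back into both original equations and into the range inequalities.
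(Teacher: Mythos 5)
Your proof is correct but takes a genuinely different route from the paper's. The paper solves the system for $\beta$ by writing $\beta=f(\alpha)=\arccos(\cdots)$ and $\beta=g(\alpha)=\arcsin(\cdots)$, determines the subinterval $(\pi,K]$ on which both are defined, gets existence from the Intermediate Value Theorem applied to $h=f-g$ (using $h(\pi)\approx 0.6092$, $h(K)\approx -0.0704$), and gets uniqueness from a ``tedious but straightforward'' verification that $h'<0$. Your complex reformulation $e^{i\alpha}+e^{i\beta}=w$ buys a much cleaner uniqueness argument: since $0<|w|<2$, the unordered pair $\{e^{i\alpha},e^{i\beta}\}$ is forced (the two unit vectors are the two intersection points of the unit circle with the unit circle centred at $w$, and these are exchanged by swapping $\alpha$ and $\beta$), and the disjointness of $(\pi,\tfrac{13\pi}{12})$ and $(0,\tfrac{\pi}{2})$ then pins down the labelling — no derivative computation needed. (One small point to tidy: in the sum-to-product step $\mu$ could a priori also be $\arg(w)+\pi$ with $\cos\nu<0$, but this again just swaps $\alpha$ and $\beta$; the circle-intersection picture handles this uniformly.) The cost, as you anticipate, is that existence becomes a bare numerical check, but it does go through: $\operatorname{Re}(w)=-1+\cos\tfrac{\pi}{12}-\tfrac{\sqrt3}{2}\approx-0.9001$, $\operatorname{Im}(w)=\sin\tfrac{\pi}{12}+\tfrac12\approx 0.7588$, so $\arg(w)\approx 2.4410$ and $\arccos(|w|/2)\approx 0.9414$, giving $\alpha\approx 3.3824\in(\pi,\tfrac{13\pi}{12})$ and $\beta\approx 1.4996\in(0,\tfrac{\pi}{2})$; the margin at the upper end of $(\pi,\tfrac{13\pi}{12})\approx(3.1416,3.4034)$ is about $0.02$, which is exactly the tightness the paper sees in $h(K)\approx-0.0704$. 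Note that the paper's proof is no less dependent on numerics for existence, so on this score the two approaches are on equal footing, while yours is genuinely lighter on the uniqueness side.
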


The proof of this lemma is deferred to Section \ref{section:proof}.

\subsection{Construction of Inner Dodecagon:}\label{subsection:const_12gon}

To start, we will construct the unique dodecagon (up to rotation) with points $\{b_1, a_{11}, a_{12}, b_{2}, a_{21}, a_{22}, b_3, a_{31}, a_{32}, b_4, a_{41}, a_{42}\}$ as shown in Figure \ref{fig:12gon}, using these constraints:
\begin{itemize}
    \item The interior angle at each $b_i$ is $2\pi/3$.
    \item The interior angle at each $a_{i1}$ and $a_{i2}$  is $11\pi/12$.
    \item $d(b_i,a_{i1})=d(a_{i2},b_{i+1})=1$ (note the modulo convention above)
    \item $d(a_{i1},a_{i2})= \frac{\sqrt{6}\cdot (1 - \tan(\alpha))}{\tan(\alpha)\cdot \tan(\beta) - 1}$
\end{itemize}

\textit{For $\alpha$ and $\beta$ given by Lemma \ref{lemma:alpha_beta_existence}, we get $d(a_{i1},a_{i1})\approx 0.7533$. Naturally, the net could be scaled arbitrarily, as long as the ratio between the two distances used is maintained.}

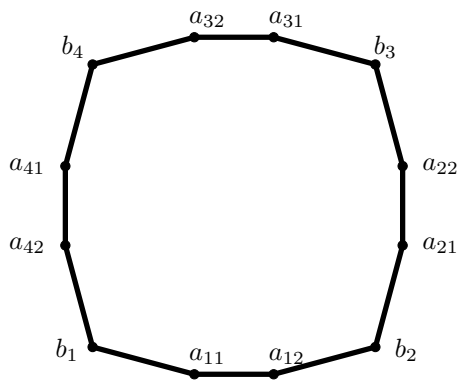
\begin{figure}[htbp]
\centering
\begin{tikzpicture}[line cap=round,line join=round,>=triangle 45,x=2cm,y=2cm, scale=0.7]
\draw [line width=2pt] (0,0)-- (0.753334985772626,0);
\draw [line width=2pt] (0.753334985772626,0)-- (1.7192608120616921,0.2588190451025174);
\draw [line width=2pt] (1.7192608120616921,0.2588190451025174)-- (1.9780798571642149,1.2247448713915854);
\draw [line width=2pt] (1.9780798571642149,1.2247448713915854)-- (1.9780798571642162,1.9780798571642115);
\draw [line width=2pt] (1.9780798571642162,1.9780798571642115)-- (1.7192608120616968,2.9440056834532804);
\draw [line width=2pt] (1.7192608120616968,2.9440056834532804)-- (0.7533349857726285,3.2028247285558025);
\draw [line width=2pt] (0.7533349857726285,3.2028247285558025)-- (0,3.2028247285558034);
\draw [line width=2pt] (0,3.2028247285558034)-- (-0.9659258262890664,2.9440056834532835);
\draw [line width=2pt] (-0.9659258262890664,2.9440056834532835)-- (-1.224744871391588,1.9780798571642153);
\draw [line width=2pt] (-1.224744871391588,1.9780798571642153)-- (-1.2247448713915885,1.2247448713915894);
\draw [line width=2pt] (-1.2247448713915885,1.2247448713915894)-- (-0.9659258262890682,0.258819045102521);
\draw [line width=2pt] (-0.9659258262890682,0.258819045102521)-- (0,0);
\filldraw [color=black] (0,0) circle (2.5pt);
\filldraw [color=black] (0.1248368242936758,0.16913923218656002) node {$a_{11}$};
\filldraw [color=black] (0.753334985772626,0) circle (2.5pt);
\filldraw [color=black] (0.8801014679428514,0.16913923218656002) node {$a_{12}$};
\filldraw [color=black] (-0.9659258262890682,0.258819045102521) circle (2.5pt);
\filldraw [color=black] (-1.008010559856487,0.4260849356960728) node[below left] {$b_{1}$};
\filldraw [color=black] (-1.2247448713915885,1.2247448713915894) circle (2.5pt);
\filldraw [color=black] (-1.3247448713915885,1.2247448713915894) node[left] {$a_{42}$};
\filldraw [color=black] (-1.224744871391588,1.9780798571642153) circle (2.5pt);
\filldraw [color=black] (-1.324744871391588,1.9780798571642153) node[left] {$a_{41}$};
\filldraw [color=black] (-0.9659258262890664,2.9440056834532835) circle (2.5pt);
\filldraw [color=black] (-0.9518010559856487,3.112335472386434) node[left] {$b_{4}$};
\filldraw [color=black] (0,3.2028247285558034) circle (2.5pt);
\filldraw [color=black] (0.1248368242936758,3.3692811758959467) node {$a_{32}$};
\filldraw [color=black] (0.7533349857726285,3.2028247285558025) circle (2.5pt);
\filldraw [color=black] (0.8801014679428514,3.3692811758959467) node {$a_{31}$};
\filldraw [color=black] (1.7192608120616968,2.9440056834532804) circle (2.5pt);
\filldraw [color=black] (1.814449480704718,3.112335472386434) node {$b_{3}$};
\filldraw [color=black] (1.9780798571642162,1.9780798571642115) circle (2.5pt);
\filldraw [color=black] (2.0780798571642162,1.9780798571642115) node[right] {$a_{22}$};
\filldraw [color=black] (1.9780798571642149,1.2247448713915854) circle (2.5pt);
\filldraw [color=black] (2.0780798571642149,1.2247448713915854) node[right] {$a_{21}$};
\filldraw [color=black] (1.7192608120616921,0.2588190451025174) circle (2.5pt);
\filldraw [color=black] (1.814449480704718,0.4260849356960728) node[below right] {$b_{2}$};
\end{tikzpicture}
\caption{The dodecagon defined at the beginning of the construction}
\label{fig:12gon}
\end{figure}

\subsection{Construction of the Balanced Vertices in the Interior of the Dodecagon}

Denote by $p$ the center of the dodecagon. For example, $p$ is the intersection of $\overline{b_1b_3}$ and $\overline{b_2b_4}$.

For each $i\in \{1,2,3,4\}$, define the point $f_i$ as the Fermat point of the triangle $a_{i1}a_{12}p$, and connect $f_i$ with an edge to each of the three corners of this triangle. An example for $i=2$ can be seen in Figure \ref{fig:12gon_int+boundary}.

Recall that the Fermat point is the unique point $x$ in a triangle such that the angle at $x$ between any two corners of the triangle is $2\pi/3$. It exists as long as all interior angles of the triangle are less than $2\pi/3$. For this triangle, it is apparent that the angles are significantly smaller (less than $\pi/2$ in fact), so $f_i$ is indeed well-defined.

\subsection{Construction of the Four Boundary Vertices}

For each $i\in \{1,2,3,4\}$, we fix a boundary vertex $d_i$ through the following constraints. An example of the construction for $i=2$ is given in Figure \ref{fig:12gon_int+boundary}, denoted in red.
\begin{itemize}
    \item $a_{i1}d_ia_{i2}$ is an isosceles triangle.
    \item The interior angles at $a_{i1}$ and $a_{i2}$ are equal to $\beta$.
    \item The triangle lies outside the dodecagon.
\end{itemize}
Note that $\beta$ is provided by Lemma \ref{lemma:alpha_beta_existence}, which means that $\beta\in(0,\pi/2)$. Therefore, each $d_i$ is uniquely well-defined. 

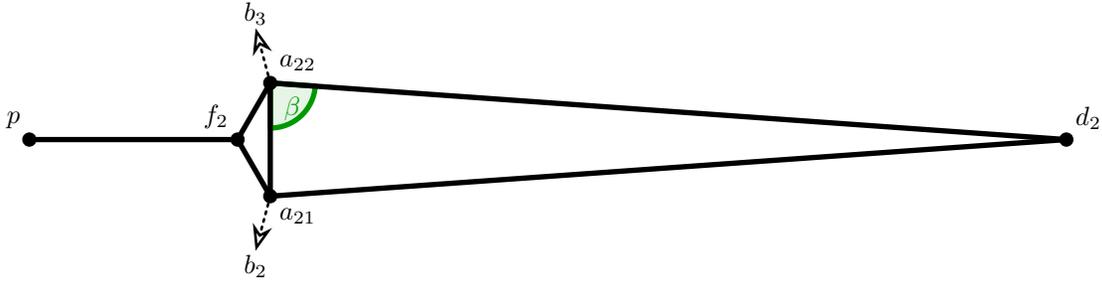
\begin{figure}[htbp]
\centering

\definecolor{qqzzqq}{rgb}{0,0.6,0}
\begin{tikzpicture}[line cap=round,line join=round,>=triangle 45,x=2cm,y=2cm]
\draw [line width=2pt,qqzzqq,fill=qqzzqq!10] (1.9780798571642162,1.9780798571642115) -- (1.9780798571642162, 1.6780798571642115) arc (-90:85.92832369838472-90:0.3) -- cycle;
\draw [line width=2pt] (1.9780798571642149,1.2247448713915854)-- (1.9780798571642162,1.9780798571642115);
\draw [line width=2pt] (7.269540714670677,1.6014123642778986)-- (1.9780798571642162,1.9780798571642115);
\draw [line width=2pt] (7.269540714670677,1.6014123642778986)-- (1.9780798571642149,1.2247448713915854);
\draw [line width=2pt] (1.9780798571642162,1.9780798571642115)-- (1.7606107787513212,1.6014123642778983);
\draw [line width=2pt] (1.7606107787513212,1.6014123642778983)-- (1.9780798571642149,1.2247448713915854);
\draw [line width=2pt] (1.7606107787513212,1.6014123642778983)-- (0.3766674928863143,1.6014123642779008);
\draw [line width=1pt,dash pattern=on 1pt off 2pt, \arrow{black}] (1.9780798571642162,1.9780798571642115)-- (1.8805911363342211,2.3419127164666547);
\draw [line width=1pt,dash pattern=on 1pt off 2pt, \arrow{black}] (1.9780798571642149,1.2247448713915854)-- (1.8805911363342183,0.860912012089142);
\draw [fill=black] (1.9780798571642162,1.9780798571642115) circle (2.5pt);
\draw[color=black] (1.9780798571642162,1.9980798571642115) node[above right] {$a_{22}$};
\draw [fill=black] (1.9780798571642149,1.2247448713915854) circle (2.5pt);
\draw[color=black] (1.9780798571642149,1.2047448713915854) node[below right] {$a_{21}$};
\draw [fill=black] (0.3766674928863143,1.6014123642779008) circle (2.5pt);
\draw[color=black] (0.3766674928863143,1.6114123642779008) node[above left] {$p$};
\draw [fill=black] (7.269540714670677,1.6014123642778986) circle (2.5pt);
\draw[color=black] (7.269540714670677,1.6114123642778986) node[above right] {$d_2$};
\draw [fill=black] (1.7606107787513212,1.6014123642778983) circle (2.5pt);
\draw[color=black] (1.7606107787513212,1.6114123642778983) node[above left] {$f_2$};
\draw[color=black] (1.8805911363342211,2.4419127164666547) node {$b_3$};
\draw[color=black] (1.8805911363342183,0.760912012089142) node {$b_2$};
\draw[qqzzqq] (2.0080798571642162,1.9480798571642115) node[below right] {$\beta$};
\end{tikzpicture}
\caption{Construction of interior vertices $p$ and $f_{2}$ as well as the boundary vertex $d_2$ with relevant connections to scale. The angle $\beta$ is labeled.}
\label{fig:12gon_int+boundary}
\end{figure}

\subsection{Construction of Additional Balanced Vertices}

For each $i\in \{1,2,3,4\}$ we define additional balanced vertices $c_i,e_i$ as follows.

Define $c_i$ as the point of intersection of $\overline{a_{(i-1)2}d_i}$ and and $\overline{a_{i1}d_{(i-1)}}$. We connect $c_i$ with an edge to all four of these vertices.

We will later use the following fact about angles:
\begin{lemma}\label{lemma:alpha}
    For each $i\in \{1,2,3,4\}$, consider the edge from $a_{i1}$ to $c_i$, and the edge from $a_{i1}$ to $a_{i2}$. The larger angle between these two edges at $a_{i1}$ is equal to $\alpha$ as given by Lemma \ref{lemma:alpha_beta_existence}.
\end{lemma}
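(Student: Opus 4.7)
The plan is to use the $\pi/2$-rotational symmetry of the construction to reduce to the case $i=1$ and then compute the angle at $a_{11}$ in coordinates. The key observation is that, by its definition, $c_1$ lies on the segment $\overline{a_{11}d_4}$, so the direction of the edge $\overline{a_{11}c_1}$ at $a_{11}$ equals the direction from $a_{11}$ to $d_4$. It therefore suffices to compute that direction and compare it with $\alpha$.

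I would place $a_{11} = (0,0)$ and $a_{12} = (\ell, 0)$ with $\ell = \sqrt{6}(1 - \tan\alpha)/(\tan\alpha\tan\beta - 1)$, then walk counterclockwise around the dodecagon using the interior angles $11\pi/12$ and $2\pi/3$ and the unit edges at each $b_i$ to obtain $a_{42} = (-M, M)$, where $M := \cos(\pi/12) + \sin(\pi/12) = \sqrt{6}/2$ via the identity $\cos x + \sin x = \sqrt{2}\sin(x + \pi/4)$. The $\pi/2$-symmetry places the dodecagon's center at $p = (\ell/2, M + \ell/2)$. Since $a_{11}d_1a_{12}$ is isosceles with base angles $\beta$ and apex on the side of $\overline{a_{11}a_{12}}$ opposite $p$, we have $d_1 = (\ell/2, -(\ell/2)\tan\beta)$. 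Rotating $d_1$ by $-\pi/2$ about $p$ (which sends $d_1 \mapsto d_4$ under the net's symmetry) yields
\[
d_4 = (-T, L), \qquad L := M + \tfrac{\ell}{2}, \qquad T := M + \tfrac{\ell}{2}\tan\beta.
\]

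The ray from $a_{11}$ to $d_4$ lies in the second quadrant and makes angle $\theta = \pi - \arctan(L/T)$ with the positive $x$-axis. The two angles at $a_{11}$ between $\overline{a_{11}a_{12}}$ and $\overline{a_{11}c_1}$ are $\theta$ and $2\pi - \theta$; since $\theta \in (\pi/2, \pi)$, the larger is $\alpha' := 2\pi - \theta = \pi + \arctan(L/T)$, whence $\tan\alpha' = L/T$. The main work is then the algebraic identity $L/T = \tan\alpha$. Using $M = \sqrt{6}/2$, the prescribed value of $\ell$ becomes $\ell/2 = M(1 - \tan\alpha)/(\tan\alpha\tan\beta - 1)$; substituting this into the definitions of $L$ and $T$ and simplifying gives
\[
L = \frac{M\tan\alpha(\tan\beta - 1)}{\tan\alpha\tan\beta - 1}, \qquad T = \frac{M(\tan\beta - 1)}{\tan\alpha\tan\beta - 1},
\]
so $L/T = \tan\alpha$ as desired. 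Because both $\alpha$ and $\alpha'$ lie in $(\pi, 13\pi/12)$, where $\tan$ is injective, we conclude $\alpha' = \alpha$. The whole argument reduces to this one cross-multiplication; the formula for $\ell$ in the construction was engineered precisely to make the identity hold, so the only non-bookkeeping step is a careful check of the signs and the correct quadrant when extracting the reflex angle.
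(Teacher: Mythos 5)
Your proof is correct, but it takes a genuinely different route from the paper's. The paper argues synthetically (for the case $i=2$): it introduces the auxiliary points $a_{22}',a_{21}',a_{31}',a_{12}'$ and their orthogonal projections onto $\overline{a_{31}a_{12}}$, applies the Intercept Theorem to two parallel chords cut by rays from $d_2$, and derives five separate distance identities whose combination collapses to $1-\cot\left(\tfrac{3}{2}\pi-\alpha'\right)=1-\tan\alpha$. You instead reduce to $i=1$, coordinatize the dodecagon directly, and exploit the fact that the lemma only concerns the \emph{direction} of $\overline{a_{11}c_1}$, which coincides with that of $\overline{a_{11}d_4}$ since $c_1\in\overline{a_{11}d_4}$ by definition. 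Your intermediate values check out against the explicit coordinates in the paper's figures: $a_{42}=(-\sqrt6/2,\sqrt6/2)$, $p=(\ell/2,\sqrt6/2+\ell/2)$, $d_1=(\ell/2,-(\ell/2)\tan\beta)$, and $d_4=(-T,L)$ (the clockwise rotation about $p$ is indeed the symmetry sending $\{a_{11},a_{12}\}$ to $\{a_{41},a_{42}\}$ and hence $d_1$ to $d_4$), and the algebra $L/T=\tan\alpha$ is exactly the identity the formula for $d(a_{i1},a_{i2})$ was engineered to produce. Both proofs therefore bottom out in $\tan\alpha'=\tan\alpha$ plus injectivity of the tangent; yours is shorter and avoids the similar-triangle bookkeeping, at the cost of an explicit coordinate walk around the dodecagon. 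One small imprecision: you assert $\alpha'\in(\pi,\tfrac{13\pi}{12})$ before having proved $\alpha'=\alpha$; what you know a priori is only $\alpha'=\pi+\arctan(L/T)\in(\pi,\tfrac{3\pi}{2})$, but since $\tan$ is injective on $(\tfrac{\pi}{2},\tfrac{3\pi}{2})$ and $\alpha$ also lies in $(\pi,\tfrac{3\pi}{2})$, the conclusion stands (the paper makes the same appeal to $\alpha'$ being a reflex angle in that range).
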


The proof of this lemma is postponed to Section \ref{section:proof}.

Define $e_i$ as the Fermat point of the triangle $c_id_id_{(i-1)}$ and connect $e_i$ with an edge to the three corners of this triangle.

An example of the situation surrounding $b_i$, $c_i$ and $e_i$ is given in Figure \ref{fig:b_nbh} for $i=2$.

\begin{figure}[htbp]
\centering

\begin{tikzpicture}[line cap=round,line join=round,>=triangle 45,x=30cm,y=30cm, scale=0.8, rotate=90]
\draw [line width=2pt] (-0.9831319305486067,0.24161294084298168)-- (-0.9659258262890682,0.258819045102521);
\draw [line width=2pt] (-1.0799680129622857,0.14477685842930255)-- (-0.9831319305486067,0.24161294084298168);
\draw [line width=2pt,\arrow{black}](-0.9659258262890682,0.258819045102521)-- (-0.9895554703999815,0.34700607748922047);
\draw [line width=2pt,\arrow{black}](-0.9831319305486067,0.24161294084298168)-- (-1.0049208179551508,0.3302727239893786);
\draw [line width=2pt,\arrow{black}](-0.9831319305486067,0.24161294084298168)-- (-0.8944721474022097,0.21982405343643752);
\draw [line width=2pt,\arrow{black}](-0.9659258262890682,0.258819045102521)-- (-0.8777387939023688,0.23518940099160762);
\draw [line width=2pt,\arrow{black}](-0.9831319305486067,0.24161294084298168)-- (-0.9613430431420626,0.1529531576965847);
\draw [line width=2pt,\arrow{black}] (-1.0799680129622857,0.14477685842930255)-- (-1.0563383688513723,0.056589826042603125);
\draw [line width=2pt,\arrow{black}](-0.9831319305486067,0.24161294084298168)--(-1.0717917136950037,0.26340182824952585) ;
\draw [line width=2pt,\arrow{black}](-1.0799680129622857,0.14477685842930255)--(-1.168155045348985,0.16840650254021594) ;

\draw [fill=black] (-0.9659258262890682,0.258819045102521) circle (2.5pt);
\draw[color=black] (-0.9559258262890682,0.267819045102521) node {$b_{2}$};
\draw [fill=black] (-0.9831319305486067,0.24161294084298168) circle (2.5pt);
\draw[color=black] (-0.9699160319269784,0.22796892388889625) node {$c_{2}$};
\draw [fill=black] (-1.0799680129622857,0.14477685842930255) circle (2.5pt);
\draw[color=black] (-1.0709680129622857,0.14277685842930255) node {$e_{2}$};
\draw[color=black] (-1.173155045348985,0.17140650254021594) node {$d_1$};
\draw[color=black] (-1.0536383688513723,0.047589826042603125) node {$d_{2}$};
\draw[color=black] (-1.0777917136950037,0.26540182824952585) node {$d_1$};
\draw[color=black] (-0.9603430431420626,0.1439531576965847) node {$d_{2}$};
\draw[color=black] (-1.0099208179551508,0.3382727239893786) node {$a_{12}$};
\draw[color=black] (-0.9925554703999815,0.35500607748922047) node {$a_{12}$};
\draw[color=black] (-0.8874721474022097,0.21782405343643752) node {$a_{21}$};
\draw[color=black] (-0.8707387939023688,0.23318940099160762) node {$a_{21}$};

\end{tikzpicture}
\caption{Construction of vertices $c_{2}$ and $e_{2}$ with relevant connections to scale.}
\label{fig:b_nbh}
\end{figure}
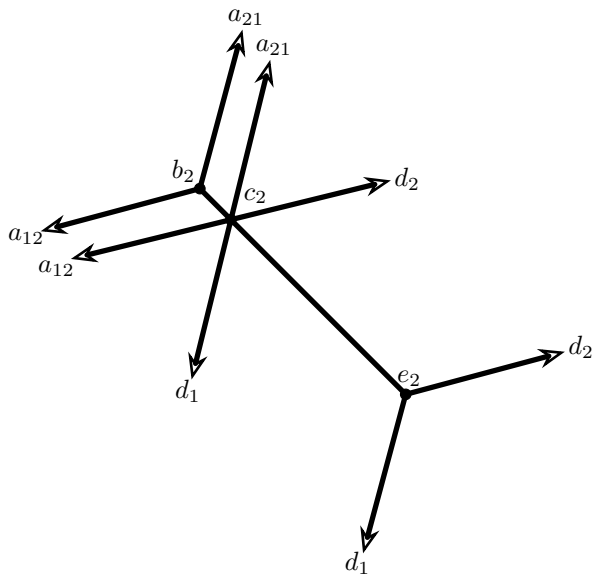

While the $c_i$ can be added without a problem, the existence of $e_i$ depends on the following lemma:

\begin{lemma}\label{lemma:e_i_fermatpoint}
    All three interior angles of the triangle $c_id_id_{(i-1)}$ are less than $2\pi/3$ (and therefore, their Fermat point $e_i$ exists).
\end{lemma}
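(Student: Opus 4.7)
The plan is to exploit the dihedral symmetry of the construction together with Lemma~\ref{lemma:alpha} in order to express the apex angle of $c_i d_i d_{(i-1)}$ as an explicit linear function of $\alpha$; the bound $\alpha < 13\pi/12$ from Lemma~\ref{lemma:alpha_beta_existence} will then yield the claim at once. By the rotational symmetry it is enough to treat $i=1$, so I focus on the triangle $c_1 d_1 d_4$.

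First I would note that the reflection across the line through $b_1$ and the center $p$ swaps $a_{42} \leftrightarrow a_{11}$ and $d_1 \leftrightarrow d_4$, hence swaps the two defining segments $\overline{a_{42}d_1}$ and $\overline{a_{11}d_4}$ and fixes their intersection $c_1$. Consequently $c_1 d_1 d_4$ is isoceles with apex $c_1$: its two base angles are equal and, together with the apex angle, sum to $\pi$, so each base angle is strictly less than $\pi/2$ and \emph{a fortiori} strictly less than $2\pi/3$. The lemma therefore reduces to proving that the apex angle $\theta := \angle d_1 c_1 d_4$ is strictly less than $2\pi/3$.

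Next I would compute $\theta$. Because $c_1$ lies in the interior of both segments, the vertical-angles identity gives $\theta = \angle a_{42} c_1 a_{11}$. The triangle $a_{42} c_1 a_{11}$ is also invariant under the same reflection and hence isoceles, so $\theta = \pi - 2\gamma$ where $\gamma = \angle c_1 a_{11} a_{42}$. I would then compute $\gamma$ via the decomposition $\angle a_{12} a_{11} c_1 = \angle a_{12} a_{11} a_{42} + \gamma$. The summand $\angle a_{12} a_{11} a_{42}$ equals $11\pi/12 - \pi/6 = 3\pi/4$, obtained by subtracting the base angle $\pi/6$ of the isoceles auxiliary triangle $a_{42} b_1 a_{11}$ (equal sides of length $1$, apex angle $2\pi/3$ at $b_1$) from the dodecagon's interior angle $11\pi/12$ at $a_{11}$. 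The summand $\angle a_{12} a_{11} c_1$ is the non-reflex angle at $a_{11}$ between $a_{11}a_{12}$ and $a_{11}c_1$, which by Lemma~\ref{lemma:alpha} equals $2\pi - \alpha$. Combining, $\gamma = 5\pi/4 - \alpha$, and therefore $\theta = 2\alpha - 3\pi/2$. The bound $\alpha < 13\pi/12$ from Lemma~\ref{lemma:alpha_beta_existence} immediately yields $\theta < 2\pi/3$, and the bound $\alpha > \pi > 3\pi/4$ ensures $\theta > 0$ so the triangle is non-degenerate.

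The main obstacle I expect is translating the ``larger angle equals $\alpha$'' convention of Lemma~\ref{lemma:alpha} into the non-reflex measurement $\angle a_{12}a_{11}c_1 = 2\pi - \alpha$: one must identify on which side of the line $\overline{a_{11}a_{12}}$ the vertex $c_1$ lies. This is the same side as $a_{42}$, which can be seen either from $c_1$ lying on the segment $\overline{a_{11}d_4}$ (with $d_4$ in that half-plane by construction), or, more intrinsically, from $c_1$ lying on the perpendicular bisector of $\overline{a_{11}a_{42}}$ via the reflection symmetry. Once this orientation issue is settled, the chain of angle identifications is purely mechanical and the proof is complete.
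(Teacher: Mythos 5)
Your proof is correct, and its skeleton --- reduce to the apex angle of an isosceles triangle via the reflective symmetry, transfer that angle to the $a$-vertices by vertical angles, and invoke Lemma \ref{lemma:alpha} together with $\alpha<\tfrac{13\pi}{12}$ --- matches the paper's. Where you genuinely diverge is in how the apex bound is extracted. The paper (working with $i=2$) argues by comparison: since $\angle c_2a_{21}a_{22}=\alpha<\tfrac{13\pi}{12}=\angle b_2a_{21}a_{22}$, the point $c_2$ lies on the ray from the centre $p$ through $b_2$ beyond $b_2$, and a point receding from the chord $\overline{a_{12}a_{21}}$ along its perpendicular bisector subtends a smaller angle, whence $\angle a_{21}c_2a_{12}<\angle a_{21}b_2a_{12}=2\pi/3$. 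You instead compute the apex angle in closed form: the dodecagon data give $\angle a_{12}a_{11}a_{42}=\tfrac{11\pi}{12}-\tfrac{\pi}{6}=\tfrac{3\pi}{4}$, so the base angle of the isosceles triangle $a_{42}c_1a_{11}$ is $\gamma=\tfrac{5\pi}{4}-\alpha$ and the apex is $\theta=2\alpha-\tfrac{3\pi}{2}$; then $\alpha\in(\pi,\tfrac{13\pi}{12})$ forces $\theta\in(\tfrac{\pi}{2},\tfrac{2\pi}{3})$. This checks out numerically ($\theta\approx 117.7^{\circ}$) and recovers both of the paper's bounds at once; it also lets you dispose of the base angles trivially (two equal angles of a triangle are each $<\pi/2$), whereas the paper separately compares with the triangle $pd_2d_1$ to get $\theta>\pi/2$. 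The one point requiring care, which you correctly flag, is the orientation check that $c_1$ lies on the same side of the line through $a_{11}$ and $a_{12}$ as $a_{42}$, so that the decomposition $\angle a_{12}a_{11}c_1=\angle a_{12}a_{11}a_{42}+\gamma$ is legitimate; your justification via $c_1$ being an interior point of $\overline{a_{11}d_4}$ with $d_4$ in that half-plane settles it (the alternative via the perpendicular bisector alone does not, since that axis crosses the line $a_{11}a_{12}$).
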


The proof of this lemma is deferred to Section \ref{section:proof}.

\subsection{Proof of irreducible geodesic net properties}

For the constructed net to be an irreducible geodesic net with four boundary vertices, the following properties must be fulfilled:
\begin{itemize}
    \item None of the edges overlap, i.e. every edge has weight one.
    \item Except for the four boundary vertices, all vertices are balanced.
    \item The only subnets are trivial one-point subnets.
\end{itemize}

The first property is clear based on the construction of the net (the existence of edges with weight $2$ or more would require that some of the drawn segments are parallel, which is not the case). We will now prove the second and third properties.

\begin{lemma}
    Except for the boundary vertex set $S=\{d_1,d_2,d_3,d_4\}$, all vertices of the constructed geodesic net are balanced.
\end{lemma}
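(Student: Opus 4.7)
The plan is to verify the balance condition at each of the 25 non-boundary vertices. Exploiting the dihedral symmetry of order 8 (four-fold rotation about $p$ together with the two diagonal and the horizontal/vertical reflections), it suffices to check one representative from each orbit: the center $p$; one interior Fermat point $f_i$; one outer Fermat point $e_i$; one dodecagon midvertex $b_i$; one intersection vertex $c_i$; and one $a$-vertex, say $a_{11}$.

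Three cases are immediate. At $p$, the four edges to $f_1,\dots,f_4$ are permuted cyclically by the $\pi/2$-rotation, so their unit vectors sum to zero. At each $f_i$ (respectively $e_i$), the three incident edges meet at mutual angles $2\pi/3$ by definition of a Fermat point; existence of $e_i$ is supplied by Lemma \ref{lemma:e_i_fermatpoint}.

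The structural observation for $c_i$ is that it carries six incident edges forming three collinear pairs, which cancel pairwise. The collinearities $a_{(i-1)2}\text{-}c_i\text{-}d_i$ and $a_{i1}\text{-}c_i\text{-}d_{i-1}$ are built into the construction of $c_i$ as the intersection of those two segments. For the third, $b_i\text{-}c_i\text{-}e_i$, I invoke the diagonal reflection through $b_i$ and $p$: it exchanges $a_{(i-1)2} \leftrightarrow a_{i1}$ and $d_{i-1} \leftrightarrow d_i$, so it fixes $c_i$ (the intersection of two swapped segments), fixes $e_i$ (the Fermat point of the reflection-invariant triangle $c_id_id_{i-1}$), and trivially fixes $b_i$. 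The same symmetry places the edge $b_ic_i$ along the outward diagonal at $b_i$, bisecting the reflex exterior angle $4\pi/3$; together with the $2\pi/3$ dodecagon interior angle, this gives the three edges at $b_i$ mutual angles $2\pi/3$, so $b_i$ is balanced.

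The decisive case is $a_{11}$, of degree five. Placing $a_{11}$ at the origin with $a_{12}$ on the positive $x$-axis and the dodecagon interior above, I will show that the five edges have directions $0$ (to $a_{12}$); $11\pi/12$ (to $b_1$, the dodecagon interior angle); $-\beta$ (to $d_1$, from the isoceles triangle $a_{11}d_1a_{12}$ with apex angle $\beta$ at $a_{11}$ lying below the axis); $-\alpha$ (to $c_1$, recognizing that $c_1$ lies on segment $\overline{a_{11}d_4}$ and invoking Lemma \ref{lemma:alpha}, where $\alpha$ is the reflex angle); and $\pi/6$ (to $f_1$). The last identification follows from the isoceles geometry of $a_{11}a_{12}p$: $f_1$ lies on the perpendicular bisector of $a_{11}a_{12}$, and the Fermat condition $\angle a_{11}f_1a_{12} = 2\pi/3$ forces $\angle a_{12}a_{11}f_1 = \pi/6$. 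Summing the five unit vectors and using the identities $\cos(-x)=\cos x$, $\sin(-x)=-\sin x$, $\cos(11\pi/12)=\cos(13\pi/12)$, $\cos(\pi/6)=\cos(11\pi/6)$, $\sin(\pi/6)=-\sin(11\pi/6)$, $\sin(11\pi/12)=-\sin(13\pi/12)$ reduces the balance equations to exactly the system of Lemma \ref{lemma:alpha_beta_existence}, which holds by the choice of $(\alpha,\beta)$. Balance at $a_{12}$ and at the other six $a$-vertices then follows by the remaining symmetries. The main technical obstacle is this final angle bookkeeping, and in particular the universality of the $\pi/6$-direction to $f_i$, which is what explains the otherwise puzzling constants $13\pi/12$ and $11\pi/6$ appearing in Lemma \ref{lemma:alpha_beta_existence}.
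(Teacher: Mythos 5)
Your proposal is correct and follows essentially the same route as the paper: the vertices $p$, $b_i$, $c_i$, $e_i$, $f_i$ are handled by Fermat-point properties and the symmetries of the construction, and the balance condition at a representative $a_{ij}$ is reduced, via the angle bookkeeping $0$, $\beta$, $\alpha$, $13\pi/12$, $11\pi/6$ (your mirrored convention with $-\beta$, $-\alpha$, $11\pi/12$, $\pi/6$ is equivalent after the trig identities you list), to exactly the system defining $(\alpha,\beta)$ in Lemma \ref{lemma:alpha_beta_existence}. The only cosmetic differences are your choice of representative vertex and the rotation-group argument at $p$ in place of the paper's ``two orthogonal lines'' observation.
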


\begin{proof}

Most of the non-boundary vertices can easily be shown to be balanced:
\begin{itemize}
    \item Due to the symmetry of the construction, $p$ is a degree 4 balanced vertex where two straight lines intersect orthogonally.
    \item At $c_i$, six edges meet. Four of them form an intersection of two straight lines by construction. The other two lead to the Fermat points $b_i$ and $e_i$. Due to the symmetry of the net, they also form a straight line. Thus, $c_i$ is a degree 6 balanced vertex.
    \item Each $b_i$ is a Fermat point. More specifically, the dodecagon was designed such that the interior angle at $b_i$ is $2\pi/3$. Due to the reflective symmetry of the net, the other two angles at $b_i$ are equal to each other and therefore also $2\pi/3$.
    \item Each $e_i$ and $f_i$ is a Fermat point by virtue of the construction. It follows that each of them is a degree 3 balanced vertex.
\end{itemize}

Proving that the vertices $a_{ij}$ are balanced, on the other hand, is very much non-trivial. Making these vertices balanced was the reason for choosing $\alpha$ and $\beta$ as provided by \ref{lemma:alpha_beta_existence}.

Without loss of generality (due to rotational and reflective symmetry), we will argue that $a_{32}$ is balanced. A close-up of $a_{32}$ is shown in Figure \ref{fig:aij_exaggerated}, where the vertices with apostrophes denote the direction of the original vertices relative to $a_{32}$.

Notice that the direction (as a unit vector) of each edge emanating from $a_{32}$ can be written in the form of $(\cos \theta, \sin\theta)$, where $\theta\in [0, 2\pi)$ is the usual polar coordinate angle. As in all figures so far, we orient the net so that the edge from $a_{32}$ to $a_{31}$ is horizontal. If we denote by $\theta_q$ the respective angle for the edge from $a_{32}$ to the vertex $q$, we can write:
\begin{itemize}
    \item $\theta_{a_{31}}=0$, by choice,
    \item $\theta_{d_{3}}=\beta$, since $d_3$ was defined specifically through this angle,
    \item $\theta_{c_{4}}=\alpha$, by Lemma \ref{lemma:alpha},
    \item $\theta_{b_{4}}=2\pi-\frac{11\pi}{12}=\frac{13\pi}{12}$, since $\frac{11\pi}{12}$ is the interior angle of the dodecagon here,
    \item $\theta_{f_{3}}=2\pi-\frac{\pi}{6}=\frac{11\pi}{6}$, since $\frac{\pi}{6}$ is the interior angle of the isosceles triangle $a_{32}f_3a_{31}$ whose angle at $f_3$ is $2\pi/3$ ($f_3$ is a Fermat point).
\end{itemize}

Adding these unit vectors together gives the following conditions on $a_{32}$ to be balanced:
\begin{align*}
    1 + \cos \beta + \cos\alpha + \cos\frac{13\pi}{12} + \cos\frac{11\pi}{6} =&\; 0,\\
    0 + \sin \beta + \sin\alpha + \sin\frac{13\pi}{12} + \sin\frac{11\pi}{6} =&\; 0,
\end{align*}
These are exactly the equations that were chosen to define the angles $\alpha$ and $\beta$. It follows that $a_{32}$ is balanced.

\definecolor{qqwwzz}{rgb}{0,0.4,0.6}
\definecolor{qqzzqq}{rgb}{0,0.6,0}

\begin{figure}[t]
\begin{tikzpicture}[line cap=round,line join=round,>=triangle 45,x=50cm,y=50cm, scale=0.8]
\draw [shift={(0,3.2028247285558034)},line width=1pt,color=qqzzqq,fill=qqzzqq,fill opacity=0.1] (0,0) -- (0:0.014793874551972647) arc (0:85.92832369838472:0.014793874551972647) -- cycle;
\draw [shift={(0,3.2028247285558034)},line width=2pt,color=qqwwzz,fill=qqwwzz,fill opacity=0.1] (0,0) -- (0:0.02219081182795897) arc (0:187.29401946379016:0.02219081182795897) -- cycle;
\draw [line width=1pt, \arrow{black}] (0,3.2028247285558034)-- (-0.09057055540886011,3.1912319815915904);
\draw [line width=1pt, \arrow{black}] (0,3.2028247285558034)-- (-0.08818703238669723,3.17919508444489);
\draw [line width=1pt, \arrow{black}] (0,3.2028247285558034)--(0.07906633020120664,3.157175761530302);
\draw [line width=1pt, \arrow{black}] (0,3.2028247285558034)--(0.09129793405100468,3.2028247285558034);
\draw [line width=1pt, \arrow{black}] (0,3.2028247285558034)--(0.0064825512808915605,3.2938922270987625);
\draw [shift={(0,3.2028247285558034)},line width=1pt,color=qqwwzz] (0:0.02219081182795897) arc (0:187.29401946379016:0.02219081182795897);
\draw [shift={(0,3.2028247285558034)},line width=1pt,color=qqwwzz] (0:0.019786807213263414) arc (0:187.29401946379016:0.019786807213263414);

\draw [fill=black] (0,3.2028247285558034) circle (2.5pt);
\draw[color=black] (-0.003,3.1965947611166805) node {$a32$};
\draw[color=black] (0.0084825512808915605,3.2938922270987625) node[above] {$d_3$};
\draw[color=black] (-0.08618703238669723,3.17719508444489) node[left] {$b_4$};
\draw[color=black] (-0.09057055540886011,3.1907319815915904) node[left] {$c_4$};
\draw[color=black] (0.09129793405100468,3.2028247285558034) node[right] {$a_{31}$};
\draw[color=black] (0.07706633020120664,3.156175761530302) node[right] {$f_3$};
\draw[color=qqzzqq] (0.0110167195911491,3.2093886356686536) node[left] {$\beta$};
\draw[color=qqwwzz] (-0.003177154960823548,3.2132378699876503) node[left] {$\alpha$};

\end{tikzpicture}
\caption{Close-up structure of vertex $a_{32}$ with angles exaggerated.}
\label{fig:aij_exaggerated}
\end{figure}
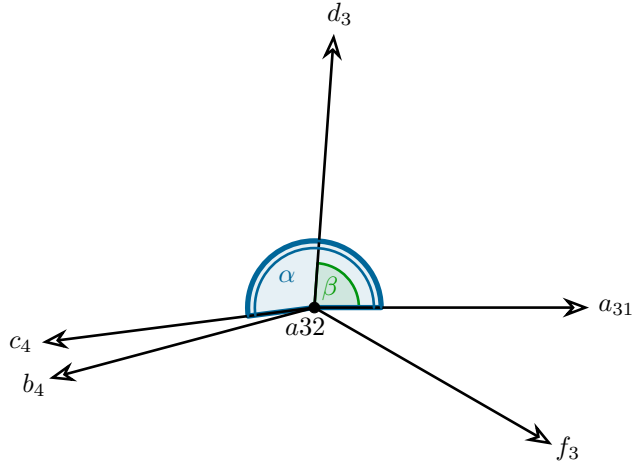
\end{proof}

We will now consider the last property to prove.

\begin{lemma}
    The geodesic net as constructed is irreducible, i.e. it has no non-trivial subnets.
\end{lemma}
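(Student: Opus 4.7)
The plan is to show that any subnet $G'=(V',E')$ of $G$ with $E'\ne\emptyset$ must equal $G$, via a cascading argument driven by the local balancing structure at each type of non-boundary vertex. The first step is to classify, at each non-boundary vertex of $G$, the subsets of incident edges whose unit tangent vectors sum to zero (``balanced subsets''). At each Fermat-type vertex $b_i,e_i,f_i$, whose three incident edges lie at $120^\circ$, the only balanced subsets are $\emptyset$ and the full triple. At $p$, the four edges to $f_1,\ldots,f_4$ form two orthogonal antipodal pairs, so every balanced subset is a union of those pairs. At each $c_i$, by construction the six incident edges split into three collinear pairs ($\{a_{i1},d_{i-1}\}$, $\{a_{(i-1)2},d_i\}$, and $\{b_i,e_i\}$), so the balanced subsets are precisely unions of pairs.

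The delicate case is each $a_{ij}$. I would show that the five unit vectors there, at the angles $0,\beta,\alpha,13\pi/12,11\pi/6$ used in the balancing check for $a_{32}$, admit no proper non-empty balanced subset. Since the full sum is zero, complementation reduces this to ruling out balanced subsets of size $1$ or $2$, and a size-$2$ balanced subset would mean two of these angles are antipodal. The ranges $\alpha\in(\pi,13\pi/12)$ and $\beta\in(0,\pi/2)$ immediately eliminate every antipodal possibility except $\beta=\pi/12$ and $\alpha-\beta=\pi$; substituting either of these back into the equations of Lemma~\ref{lemma:alpha_beta_existence} yields $|\cos\alpha|>1$, a contradiction. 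Hence at every $a_{ij}$ the balanced subsets are only $\emptyset$ and the full set of five edges, i.e.\ the $a_{ij}$ are rigid ``all-or-nothing'' junctions.

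Next I would run the cascade. Since no edge of $G$ joins two boundary vertices, $E'\ne\emptyset$ forces some non-boundary vertex to be incident to an edge of $E'$. Case 1: some $a_{ij}$ has an edge in $E'$. Then by the rigidity above all five of its edges lie in $E'$, so the five neighbors $b_i,a_{i(3-j)},f_i,c_\bullet,d_\bullet$ are in $V'$. Each Fermat-type neighbor now demands its full triple, each newly hit $a_{kl}$ demands its own full five edges, at each $c_k$ any one present edge forces its collinear partner (so the whole pair is present), and at $p$ one edge from any $f_k$ forces the antipodal edge, activating another $f_k$ and its five-edge triangle. Walking through this bookkeeping one step at a time---which is manageable thanks to the $\pi/2$-rotational and reflective symmetry of $G$---one checks that after a few iterations every interior vertex of $G$ lies in $V'$ with all incident edges in $E'$, so $G'=G$.

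Case 2: no $a_{ij}$ has an edge in $E'$. Then each $f_i$ has two of its three Fermat edges unavailable (the two edges to $a_{i1},a_{i2}$), so its Fermat triple cannot be formed, and $f_i$ contributes no edges to $E'$; hence $p$ too has no edges. Each $b_i$ similarly has the edges to $a_{i1}$ and $a_{(i-1)2}$ blocked, so it contributes no edges. At each $c_i$, every collinear-pair partner of every a~priori available edge is already ruled out (either because the partner is some $a_{kl}\notin V'$ or because $b_i$ is now edgeless), forcing all three pairs to be absent; so $c_i$ contributes no edges. Finally each $e_i$ loses the edge to $c_i$ and therefore cannot complete its Fermat triple, contributing no edges either. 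Hence $E'=\emptyset$, contradicting the assumption, so this case does not occur. The main obstacle, and the linchpin of the whole argument, is the rigidity statement for the $a_{ij}$: unlike the Fermat, $c_i$, or $p$ vertices, whose balanced subsets form a rich lattice that would \emph{allow} decompositions, each $a_{ij}$ admits no accidental balanced subset, and this relies on the exact numerical content of $\alpha$ and $\beta$ from Lemma~\ref{lemma:alpha_beta_existence} rather than on any qualitative symmetry.
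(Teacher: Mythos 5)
Your proposal is correct and follows essentially the same strategy as the paper: establish that each $a_{ij}$ admits no proper nonempty balanced subset of its five incident edges, then propagate this rigidity around the net (the paper phrases the dichotomy as ``some $a_{ij}$ is missing an edge, hence $G'$ is trivial'' versus your complementary ``some $a_{ij}$ has an edge, hence $G'=G$'', but the cascade is the same). Your complementation argument reducing the $a_{ij}$ rigidity to ruling out antipodal pairs is a nice explicit justification of a step the paper only asserts; just note that in the case $\alpha=\beta+\pi$ the contradiction is $1+\cos\frac{13\pi}{12}+\cos\frac{11\pi}{6}\neq 0$ rather than $|\cos\alpha|>1$.
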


\begin{proof}
    Denote the net as constructed by $G$ and let $G'$ be a subnet of $G$. As a first case, assume that at least one edge incident to $a_{32}$ (shown in figure \ref{fig:aij_exaggerated}) is not contained in $G'$. 
    
    Generally, this would leave the following options for a  $G'$ and its relation to $a_{32}$:
    \begin{enumerate}
        \item $G'$ does not contain $a_{32}$.
        \item $G'$ contains $a_{32}$ and \textit{exactly one} edge incident to it. This is impossible since degree 1 vertices can't be balanced.
        \item $G'$ contains $a_{32}$ and \textit{exactly two} edges incident to it. This is impossible since it would require the two edges to form a straight line but no such subset of edges exist.
        \item $G'$ contains $a_{32}$ and \textit{exactly three} edges incident to it. This is impossible since it would require three edges with mutual angles being $2\pi/3$ but no such subset of edges exists (see the previous proof for the exact angles).
        \item $G'$ contains $a_{32}$ and \textit{exactly four} edges incident to it. This is impossible since it would require the four edges to form two straight lines but no such subset of edges exist.
    \end{enumerate}
    
    Only the first option is therefore possible. We will argue that it implies that $G'$ is trivial.

    For that purpose, assume that $G'$ does not contain $a_{32}$ (and therefore no edges incident to it). This means that the neighbor $a_{31}$ would lose one of its edges. By an argument analogous to the one above, $a_{31}$ must not be in $G'$ at all. This means that its neighbor $b_3$ (a degree 3 Fermat point) would lose one of its edges and its remaining two edges can't balance $b_3$. Therefore, $b_3$ can't be in $G'$. We can now continue analogously around the entire dodecagon to conclude that none of the $b_i$ and $a_{ij}$ are in $G'$.
    
    $p$ as well as the four $f_i$ are now disconnected from the boundary vertices and can't be in $G'$.

    At this point, each $c_i$ lost three of its edges, all on one side (compare figure \ref{fig:b_nbh}) which means none of the $c_i$ can remain in $G'$ by just using the remaining three edges. Lastly, each Fermat point $e_i$ now lost one of its edges and therefore can't be in $G'$. At this point, only the $d_i$ are left. Since $G'$ must be connected, it must now be a trivial geodesic net consisting of a single $d_i$.

    The same argument can be repeated under the assumption that any other $a_{ij}$ is missing at least one edge. This means that for $G'$ to be nontrivial, it must contain all of the $a_{ij}$ and all edges incident to those vertices. By ``inverting'' the above argument, we could now argue that the entire dodecagon must be in $G'$, that then also all other vertices must be in $G'$, and finally arrive at the point that $G'=G$.

    This proves that $G$ is indeed irreducible.
\end{proof}

\section{Towards a series of nets}
\label{section:outro}

In the context of Conjecture \ref{conj:fourandinfinite}, we consider the present construction quite promising as it makes use of several properties and methods some of which were (to our knowledge) not previously employed. They could help with the construction of a series of geodesic nets as needed to  prove the Conjecture. In the following, we would like to highlight a few of them.

\subsection{Using ``irregular'' vertices}

It is an inherent property of degree 3 and degree 4 balanced vertices that they are highly regular in their geometry: Degree 3 vertices must look like a Fermat point with equal angles between edges, whereas degree 4 vertices are always just the intersection of two straight lines. In other words, only at degree 5 and above can the incident edges have an irregular distribution, where ``irregular'' means that the incident edges do not exhibit any rotational or reflective symmetries.

This leads us to an important qualitative distinction between the example previously constructed in \cite{Parsch2019-fz} and the present one. Namely, while the former does make use of degree 5 vertices, these vertices do have a reflective symmetry. The present example, however, includes irregular vertices as shown in figure \ref{fig:aij_exaggerated} that have no symmetries (neither as a whole, nor for subsets of three or four edges).

We consider this a quite promising development, since such irregular vertices are a lot more versatile as a tool to balance a net. In our case, we used them as the main ``wiggle room'' when finding a candidate net through gradient descent (see below).

\subsection{``Canceling'' imbalance at the centre}

There is another geometric approach that wasn't employed in \cite{Parsch2019-fz}. The present net has a vertex in the centre at which some of the imbalance that is produced by vertices arranged around the centre is being ``canceled'' by joining edges there. This is quite useful, and it seems highly likely that any series in the sense of the Conjecture would need to make use of this approach. We think this is the case since the four boundary vertices -- due to their position relative to the ``ring'' of vertices -- can only handle a limited amount of imbalance per vertex.

It seems to be a promising approach for future constructions to employ this ``cancelation at the centre'' several (likely: an ever increasing number of) times to allow for more vertices in the ''ring'' region.

\subsection{Using gradient descent to find candidate nets} While we provide a proof that the net as constructed is balanced, we originally discovered its shape through an algorithmic approach.

Based on the shape of the net constructed in \cite{Parsch2019-fz}, we developed a dodecagon (instead of octagon) based shape at the centre, adding additional Fermat-point-like vertices to achieve a \textit{roughly} balanced net based on visual inspection.

Specifically, we fixed a dodecagon of $b_i$ and $a_{ij}$ with the same angles as as in figure \ref{fig:12gon}, but (for now) with all equal sides. We then added the $f_i$ as Fermat points towards the centre. All these vertices were kept fixed. We then introduced the $c_i$ as points that lie on the two lines through $b_1/b_3$ and $b_2/b_4$, but kept their distance from the centre $p$ as a parameter $t=d(c_i,p)$. Once $t$ is chosen and the $c_i$ are fixed, each $d_i$ was also fixed. For example, $d_2$ lies on the intersection of the extended line segments $\overline{a_{12}c_2}$ and $\overline{a_{31}c_3}$. Lastly, the $e_i$ as Fermat points were fixed as well.

This means that for any given value of $t>d(b_i, p)$ that makes the $d_i$ well-defined, we get a unique net, always of the same topology. All $b_i$, $e_i$ and $f_i$ are balanced as Fermat points, all $c_i$ are balanced as degree six balanced vertices. Only the $a_{ij}$ were not necessarily balanced, leaving us necessary ``wiggle room''.

Using computer software, we devised a tool that computed an approximation of the imbalance $\mathrm{imb}(a_{ij})$ for any given value of $t$. For a vertex $v$, $\mathrm{imb}(v)$ refers to the norm of the sum of all edge-unit-vectors at $v$, i.e. $v$ is balanced if and only if $\mathrm{imb}(v)=0$ (see also \cite{Nabutovsky2019-iy}). After some experimentation, we observed that, indeed, for a small range of $t$-values, the approximation of $\mathrm{imb}(a_{ij})$ appeared to be very close to zero.

We then moved on to further empirical verification using gradient descent. Consider the total imbalance of all non-boundary vertices, i.e.:
\begin{align*}
    L=\sum_{v\not\in \{d_1,d_2,d_3,d_4\}}\mathrm{imb}(v)
\end{align*}
If we fix all but one vertex $v_0$ and consider $L(v_0)$ as a function of the position of this single vertex, one can see that $L$ is smooth, and that $L\geq 0$ with equality if and only if $G$ is a geodesic net where only the four $d_i$ are unbalanced vertices.

This is why we used a step-by-step gradient descent with $L$ as a loss function as follows: We first fixed the positions of all vertices close to those of the seemingly-balanced net from above. Using an algorithm in python, we then went through all non-boundary vertices cyclically, each time adjusting the position of the vertex based on its sum of incident unit vectors. This algorithm produced very strong empirical evidence that the given topology can in fact produce a balanced net. Namely, for a wide variety of step sizes and even for completely ``off-the-chart'' initial positions for the vertices, the net seemed to stabilize in a balanced configuration.

Equipped with this empirical backing, we then embarked on a proof based on the idea that matching angles $\alpha$ and $\beta$ as required by Lemma \ref{lemma:alpha_beta_existence} can be chosen to get a truly balanced net.

The gradient descent algorithm and an animation showing the process for one example can be found in the Github repository \href{https://github.com/zhr98971/GeoNet}{https://github.com/zhr98971/GeoNet}, under the files named \texttt{algorithm\_4\_25.ipynb} and \texttt{animation\_4\_25.gif} respectively.

\subsection{Attempts at generalizing the construction} The two geodesic nets in Figure \ref{fig:twoirreduciblenets} appear to be quite similar. Specifically, note that they have an octagon and dodecagon ``ring'' of vertices respectively. This leads to a natural follow-up question.

\begin{question}
Does there exist a series $G_n, n\geq 2$ of geodesic nets similar in shape to those in Figure \ref{fig:twoirreduciblenets}, so that each $G_n$ has a ``ring'' of $4n$ vertices (i.e. so that the two nets in the figure are $G_2$ and $G_3$ of such a series)?
\end{question}

So far we considered several possible structures of such geodesic nets, especially those with $8n+4$ vertices in the ``ring", and used the same optimization techniques involving the loss function $L$ and gradient descent. While these attempts so far only resulted in the algorithm converging towards what appear to be multinets (i.e. geodesic nets with integer-weighted edges), we are cautiously optimistic that an ``inductive construction'' in the sense of the above question can be found. This would provide a series as desired by Conjecture \ref{conj:fourandinfinite}.

\section{Auxiliary Proofs}\label{section:proof}

There are three rather technical proofs that we previously deferred to this section.

\begin{proof}(of Lemma \ref{lemma:alpha_beta_existence})
    Rearrange the equations as follows:
    \begin{align*}
        \cos \beta =&\; -1 - \cos\alpha - \cos\frac{13\pi}{12} -\cos\frac{11\pi}{6},\\
        \sin \beta =&\; - \sin\alpha - \sin\frac{13\pi}{12} - \sin\frac{11\pi}{6}.
    \end{align*}
    We have the given restriction that $\beta\in (0, \frac{\pi}{2})$, which implies that the above equations are true if and only if the following equations are true:
    \begin{align*}
        \beta =&\; f(\alpha):=\arccos\left(-1 - \cos\alpha - \cos\frac{13\pi}{12} -\cos\frac{11\pi}{6}\right),\\
        \beta =&\; g(\alpha):=\arcsin\left(- \sin\alpha - \sin\frac{13\pi}{12} - \sin\frac{11\pi}{6}\right).
    \end{align*}
    The functions $f$ and $g$ that we just defined are only well-defined if these inequalities are true:
    \begin{align*}
        -1\leq -1 - \cos\alpha - \cos\frac{13\pi}{12} - \cos\frac{11\pi}{6}\leq 1\\
        -1\leq - \sin\alpha - \sin\frac{13\pi}{12} - \sin\frac{11\pi}{6}\stackrel{(*)}{\leq} 1
    \end{align*}
    Due to the restriction that $\alpha\in(\pi,13\pi/12)$, the first expression is decreasing in $\alpha$. Verifying the values at $\alpha=\pi$ and $\alpha=13\pi/12$, it follows that the bounds are fulfilled.
    
    On the other hand, the second expression is increasing in $\alpha$. Its minimum at $\alpha=\pi$ is within the lower bound. The upper bound (marked with $(*)$) is only fulfilled up to $\alpha=K:=\pi-\arcsin\left(-1- \sin\frac{13\pi}{12} - \sin\frac{11\pi}{6}\right)<13\pi/12$.

    Therefore, there is no solution $(\alpha,\beta)$ for the system of equations with $\alpha\in (K,13\pi/2)$. Consequently, we can focus on $\alpha\in(\pi,K]$ to see if there is a unique solution for that range.
    
    \textbf{Claim 1:} The functions $f$ and $g$ are well defined, continuous on $[\pi,K]$ and $C^1$ on $\alpha\in(\pi,K)$.

    \textit{Proof of Claim 1:} We established that $f$ and $g$ are well-defined up to $\alpha=K$ above. It is now immediate that they are continous and $C^1$ as claimed, since both of them are defined through a combination of functions having those properties.

    \textbf{Claim 2:} There is a unique value $\alpha_0\in(\pi,K)$ such that $f(\alpha_0)=g(\alpha_0)$.

    \textit{Proof of Claim 2:} For the function $h(\alpha):=f(\alpha)-g(\alpha)$, note that $h(\pi)\approx 0.6092$ whereas $h(K)\approx -0.0704$. By the Intermediate Value Theorem (which applies thanks to Claim 1), an $\alpha_0\in(\pi,K)$ such that $h(\alpha_0)=0$ exists.

    For the uniqueness, it is a rather tedious but straightforward process to verify that for $\alpha\in(0,K)$:
    \begin{align*}
        h'(\alpha)=\frac{\sin\alpha}{\sqrt{1-\left(-1-\cos\alpha - \cos\frac{13\pi}{2} - \cos\frac{11\pi}{6}\right)^2}}-\frac{\cos\alpha}{\sqrt{1-\left(- \sin\alpha - \sin\frac{13\pi}{2} -\sin\frac{11\pi}{6}\right)^2}}< 0
    \end{align*}
    So $h$ is injective in the given range and there is indeed only one such $\alpha_0$.

    Now that Claim 2 is proven, recall that $(\alpha,\beta)$ is a solution to the original system of equations if and only if $\beta=f(\alpha)=g(\alpha)$. Therefore $(\alpha_0,f(\alpha_0))=(\alpha_0,g(\alpha_0))$ is the unique solution as required by the lemma.
\end{proof}

\begin{remark}
    \label{rmk:aij_balanced}
    The exact solution $(\alpha,\beta)$ yielded by this lemma is in fact known to us. It was found using a Python script involving the package \texttt{sympy}. However, the expressions of the two angles are rather complicated and tedious, and establishing that they do in fact solve the system of equations requires significant brute-force work spreading across several pages. This is why we are only proving the existence of the solution here. 
\end{remark}

\begin{proof}(of Lemma \ref{lemma:alpha}) Due to symmetry, we will focus on the case $i=2$ which is shown in Figure \ref{fig:angles_exaggerated}. Note that some angles are slightly exaggerated for better visibility.

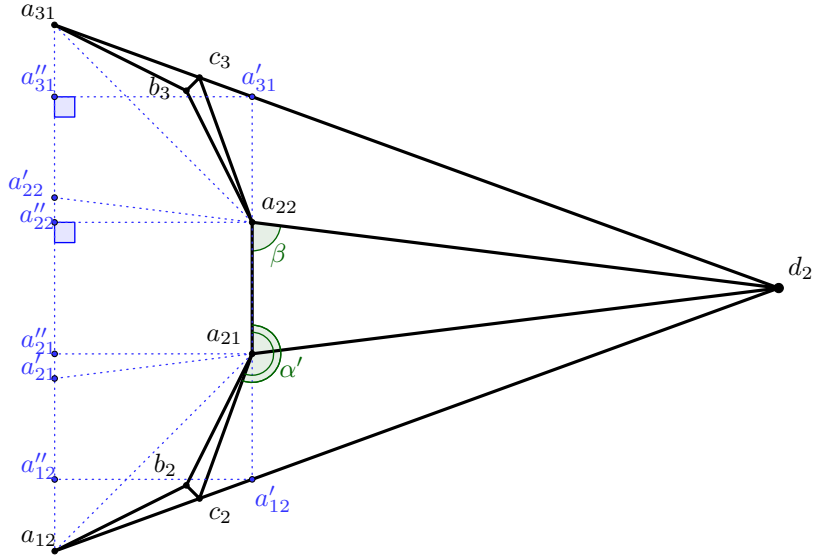
\begin{figure}[htbp]
\centering
\definecolor{qqqqff}{rgb}{0,0,1}
\definecolor{qqwuqq}{rgb}{0,0.39215686274509803,0}
\definecolor{ttttff}{rgb}{0.2,0.2,1}

\begin{tikzpicture}[line cap=round,line join=round,>=triangle 45,x=2.5cm,y=2.5cm, scale=0.7]
\draw [shift={(0.5,2)},line width=0.5pt,color=qqwuqq,fill=qqwuqq,fill opacity=0.10000000149011612] (0,0) -- (-90:0.21748219413160372) arc (-90:-7.1250163489017995:0.21748219413160372) -- cycle;
\draw [shift={(0.5,1)},line width=0.5pt,color=qqwuqq,fill=qqwuqq,fill opacity=0.10000000149011612] (0,0) -- (-109.98310652189998:0.21748219413160372) arc (-109.98310652189998:90:0.21748219413160372) -- cycle;
\draw[line width=0.5pt,color=qqqqff,fill=qqqqff,fill opacity=0.1] (-1.0015452569526568,2.800059202936838) -- (-0.8477621395540563,2.8001312120787194) -- (-0.8478341486959378,2.95391432947732) -- (-1.0016172660945384,2.9538423203354385) -- cycle; 
\draw[line width=0.5pt,color=qqqqff,fill=qqqqff,fill opacity=0.10000000149011612] (-1.0010982899177807,1.8455139576842707) -- (-0.8473151725191802,1.8455859668261523) -- (-0.8473871816610616,1.999369084224753) -- (-1.0011702990596623,1.9992970750828714) -- cycle; 
\draw [line width=1.2pt] (0,3)-- (0.5,2);
\draw [line width=1.2pt] (0.5,2)-- (0.5,1);
\draw [line width=1.2pt] (0.5,1)-- (0,0);
\draw [line width=1.2pt] (0.5,2)-- (4.5,1.5);
\draw [line width=1.2pt] (4.5,1.5)-- (0.5,1);
\draw [line width=1.2pt] (0,3)-- (0.1,3.1);
\draw [line width=1.2pt] (4.5,1.5)-- (0.1,3.1);
\draw [line width=1.2pt] (0.1,3.1)-- (0.5,2);
\draw [line width=1.2pt] (0.1,-0.1)-- (4.5,1.5);
\draw [line width=1.2pt] (0,0)-- (0.1,-0.1);
\draw [line width=1.2pt] (0.1,-0.1)-- (0.5,1);
\draw [line width=1.2pt] (-1.0018733241051274,3.5006812087655006)-- (0,3);
\draw [line width=1.2pt] (-1.0018733241051274,3.5006812087655006)-- (0.1,3.1);
\draw [line width=1.2pt] (-1,-0.5)-- (0.1,-0.1);
\draw [line width=1.2pt] (0,0)-- (-1,-0.5);
\draw [line width=0.5pt,dash pattern=on 0.5pt off 2pt,color=ttttff] (-1.0018733241051274,3.5006812087655006)-- (-1,-0.5);
\draw [line width=0.5pt,dash pattern=on 0.5pt off 2pt,color=ttttff] (-1.0012584989822628,2.187657312372783)-- (0.5,2);
\draw [line width=0.5pt,dash pattern=on 0.5pt off 2pt,color=ttttff] (-1.0006145438375893,0.8124231820203013)-- (0.5,1);
\draw [line width=0.5pt,dash pattern=on 0.5pt off 2pt,color=ttttff] (0.5,2.954545454545454)-- (0.5,0.04545454545454544);
\draw [line width=0.5pt,dash pattern=on 0.5pt off 2pt,color=ttttff] (0.5,2)-- (-1.0011702990596623,1.9992970750828714);
\draw [line width=0.5pt,dash pattern=on 0.5pt off 2pt,color=ttttff] (0.5,1)-- (-1.0007020478802684,0.9992972943420864);
\draw [shift={(0.5,1)},line width=0.5pt,color=qqwuqq] (-109.98310652189998:0.21748219413160372) arc (-109.98310652189998:90:0.21748219413160372);
\draw [shift={(0.5,1)},line width=0.5pt,color=qqwuqq] (-109.98310652189998:0.16311164559870278) arc (-109.98310652189998:90:0.16311164559870278);
\draw [line width=0.5pt,dash pattern=on 0.5pt off 2pt,color=ttttff] (-1.0018733241051274,3.5006812087655006)-- (0.5,2);
\draw [line width=0.5pt,dash pattern=on 0.5pt off 2pt,color=ttttff] (0.5,1)-- (-1,-0.5);
\draw [line width=0.5pt,dash pattern=on 0.5pt off 2pt,color=ttttff] (-1.0016172660945384,2.9538423203354385)-- (0.5,2.954545454545454);

\draw [fill=black] (0,0) circle (1.5pt);
\draw[color=black] (0,0) node[above left] {$b_{2}$};
\draw [fill=black] (4.5,1.5) circle (2.5pt);
\draw[color=black] (4.5,1.5) node[above right] {$d_{2}$};
\draw [fill=black] (0,3) circle (1.5pt);
\draw[color=black] (-0.04,3.025) node[left] {$b_{3}$};
\draw [fill=black] (0.5,2) circle (1.5pt);
\draw[color=black] (0.5,2) node[above right] {$a_{22}$};
\draw [fill=black] (0.5,1) circle (1.5pt);
\draw[color=black] (0.5,1) node[above left] {$a_{21}$};
\draw [fill=black] (0.1,3.1) circle (1.5pt);
\draw[color=black] (0.1,3.1) node[above right] {$c_{3}$};
\draw [fill=black] (0.1,-0.1) circle (1.5pt);
\draw[color=black] (0.1,-0.1) node[below right] {$c_{2}$};
\draw [fill=black] (-1.0018733241051274,3.5006812087655006) circle (1.5pt);
\draw[color=black] (-1.12,3.6) node {$a_{31}$};
\draw [fill=black] (-1,-0.5) circle (1.5pt);
\draw[color=black] (-1.12,-0.4) node {$a_{12}$};
\draw [fill=ttttff] (-1.0012584989822628,2.187657312372783) circle (1.5pt);
\draw[color=ttttff] (-1.0052286612538641,2.3) node[left] {$a_{22}'$};
\draw [fill=ttttff] (-1.0006145438375893,0.8124231820203013) circle (1.5pt);
\draw[color=ttttff] (-1.12,0.9) node {$a_{21}'$};
\draw [fill=ttttff] (0.5,2.954545454545454) circle (1.5pt);
\draw[color=ttttff] (0.5571466976673616,3.101961551587272) node {$a_{31}'$};
\draw[line width=0.5pt,dash pattern=on 0.5pt off 2pt,color=ttttff] (0.5,0.04545454545454544)--(-1.000255080845392,0.04545454545454544);
\draw [fill=ttttff] (0.5,0.04545454545454544) circle (1.5pt);
\draw[color=ttttff] (0.45,0.06545454545454544) node[below right] {$a_{12}'$};
\draw [fill=ttttff] (-1.0011702990596623,1.9992970750828714) circle (1.5pt);
\draw[color=ttttff] (-1.13,2.05) node {$a_{22}''$};
\draw [fill=ttttff] (-1.0007020478802684,0.9992972943420864) circle (1.5pt);
\draw[color=ttttff] (-1.12,1.1) node {$a_{21}''$};
\draw[color=qqwuqq] (0.7,1.75) node{$\beta$};
\draw[color=qqwuqq] (0.8,0.9) node {$\alpha'$};
\draw [fill=ttttff] (-1.0016172660945384,2.9538423203354385) circle (1.5pt);
\draw[color=ttttff] (-1.12,3.1) node{$a_{31}''$};
\draw [fill=ttttff] (-1.000255080845392,0.04475204908951896) circle (1.5pt);
\draw[color=ttttff] (-1.12,0.15) node {$a_{12}''$};

\end{tikzpicture}
\caption{Exaggerated local structure of the geodesic net in Figure \ref{fig:12gon}.}
\label{fig:angles_exaggerated}
\end{figure}

We define the following additional points as seen in the figure:

\begin{itemize}
    \item $a_{22}'$ is the intersection of the line through $d_2$ and $a_{22}$ with $\overline{a_{31}a_{12}}$.
    \item $a_{21}'$ is the intersection of the line through $d_2$ and $a_{21}$ with $\overline{a_{31}a_{12}}$.
    \item $a_{31}'$ is the intersection of the line through $a_{22}$ and $a_{21}$ with $\overline{a_{31}d_2}$.
    \item $a_{12}'$ is the intersection of the line through $a_{22}$ and $a_{21}$ with $\overline{a_{12}d_2}$.
    \item $a_{12}''$, $a_{21}''$, $a_{22}''$, $a_{31}''$ are the orthogonal projections onto $\overline{a_{31}a_{12}}$ of $a_{12}'$, $a_{21}$, $a_{22}$, $a_{31}'$ respectively.
\end{itemize}

Define the angle $\alpha':=\angle c_2a_{21}a_{22}$. To prove the lemma, we need to establish that $\alpha'=\alpha$ (recall that $\alpha$ is provided by Lemma \ref{lemma:alpha_beta_existence} and -- like $\beta$ -- was used in the construction of the net).

We will do so through the following equations:
\begin{align}
    \frac{d(a_{31}',a_{22})}{d(a_{31}, a_{22}')} &=\;\frac{d(a_{21},a_{22})}{d(a_{21}',a_{22}')}\\
    d(a_{21},a_{22})&=\;\frac{\sqrt{6}\cdot (1 - \tan(\alpha))}{\tan(\alpha)\cdot \tan(\beta) - 1}\\
    d(a_{21}',a_{22}')&=\;\frac{\sqrt{6}\cdot (1 - \tan(\alpha))}{\tan(\alpha)\cdot \tan(\beta) - 1}+\sqrt{6}\cdot \cot(\beta)\\
    d(a_{31}, a_{22}')&=\;\frac{\sqrt{6}}{2}\cdot (1-\cot(\beta))\\
    d(a_{31}',a_{22})&=\;\frac{\sqrt{6}}{2}\cdot \left(1-\cot\left(\frac{3}{2}\pi - \alpha'\right)\right)
\end{align}
Before proving each of these equations, we will show that they imply that $\alpha'=\alpha$ as desired.

Plugging the other four equations into the first one yields
\begin{align}
    \label{equ:scale_reduced}
    \frac{\frac{\sqrt{6}}{2}\cdot (1-\cot(\frac{3}{2}\pi - \alpha'))}{\frac{\sqrt{6}}{2}\cdot (1-\cot(\beta))} =&\; \frac{\frac{\sqrt{6}\cdot (1 - \tan(\alpha))}{\tan(\alpha)\cdot \tan(\beta) - 1}}{\frac{\sqrt{6}\cdot (1 - \tan(\alpha))}{\tan(\alpha)\cdot \tan(\beta) - 1}+\sqrt{6}\cdot \cot(\beta)}.
\end{align}

After some elementary simplification, equation \ref{equ:scale_reduced} becomes:
\begin{align}
    \label{equ:scale_simplified}
    1-\cot\left(\frac{3}{2}\pi - \alpha'\right)=&\;1 - \tan(\alpha).
\end{align}

Using the cotangent angle difference identity given by  $\cot(\theta-\varphi)=\frac{\cot(\theta)\cot(\varphi)+1}{\cot(\varphi)-\cot(\theta)}$, we have: 
\begin{align}
    \cot\left(\frac{3}{2}\pi - \alpha'\right) =\frac{\cot\left(\frac{3}{2}\pi\right)\cot(\alpha')+1}{\cot(\alpha')-\cot\left(\frac{3}{2}\pi\right)}=\frac{0\cdot \cot(\alpha')+1}{\cot(\alpha')-0}=\tan(\alpha').
\end{align}
We can use this to simplify the LHS of equation \ref{equ:scale_simplified} to get:
\begin{align}
    1-\tan(\alpha')=&\;1 - \tan(\alpha).
    \label{equ:scale_final}
\end{align}
and therefore $\tan\alpha'=\tan\alpha$. Generally, due to the periodicity of the tangent, the two angle arguments could differ. However, $\alpha\in(\pi, \frac{13\pi}{12})\subset (\pi,3\pi/2)$ by choice in Lemma \ref{lemma:alpha_beta_existence}, and it is apparent from the construction that $\alpha'$ is a reflex angle also in the range $(\pi,3\pi/2)$. Since the tangent is injective in this range, we get $\alpha=\alpha'$. This is the desired result.

We are left to argue equations (1) to (5). Equation (1) follows directly from the Intercept Theorem and the fact that $\overline{a_{31}a_{12}}$ and $\overline{a_{31}'a_{12}'}$ are two parallel line segments intersecting rays emanating from $d_2$. Equation (2) is true since we \textit{defined} this to be the distance during the construction of the dodecagon. Equations (3), (4) and (5) warrant individual proofs.

\textbf{Proof of equations (3) and (4):} Note that $\angle c_2a_{21}a_{22}=2\pi-\alpha'$. Notice that the net admits a reflective symmetry on $(d_2,d_4)$ and $(d_1,d_3)$, so clearly $\angle c_2a_{21}a_{22}=\angle c_3a_{22}a_{21}=2\pi - \alpha'$.

Let $a_{22}'$ be the intersect of the extension of $(d_2, a_{22})$ onto $(a_{12},a_{31})$, and let $a_{22}''$ be the unique point on $(a_{12},a_{31})$ such that $(a_{22}, a_{22}'')\perp (a_{12},a_{31})$; define $a_{21}'$ and $a_{21}''$ similarly (see Figure \ref{fig:angles_exaggerated}). Since $(a_{12}, a_{31})\parallel(a_{22}, a_{21})$, we have $\angle a_{22}''a_{22}'a_{22}=\angle a_{21}a_{22}d_2=\beta$ by definition.

Recall that by construction, $d(a_{31}, b_3)=d(b_3,a_{22})=1$, and $\angle a_{31}b_3a_{22}=\frac{2}{3}\pi$ radians. It then naturally follows that $d(a_{31},a_{22})=1\cdot \sqrt{3}=\sqrt{3}$ (well known triangle ratio). 

Further notice that the net admits a reflective symmetry on $(b_1, b_3)$. This means that the points $a_{31},a_{22}'',a_{22}$ form a right angle isosceles triangle with $\angle a_{31}a_{22}''a_{22}=\frac{\pi}{2}$. This means that $d(a_{31},a_{22}'')=d(a_{22}'',a_{22})=\frac{\sqrt{3}}{\sqrt{2}}$ (well known triangle ratio).

For simplicity, denote $M:=d(a_{22}'',a_{22}')$. Then using the above found values, we must have $\tan(\beta)=\frac{\sqrt{3}/\sqrt{2}}{M}$, meaning $M=\frac{\sqrt{6}}{2}\cdot \cot(\beta)$. Since $a_{21}''$ and $a_{22}''$ are perpendicular extensions from $a_{21}$ and $a_{22}$ respectively, we must have $d(a_{21},a_{22})=L$ by definition. Thus, $d(a_{21}',a_{22}')=d(a_{21}',a_{21}'')+d(a_{21}'',a_{22}'')+d(a_{22}'',a_{22}')=2\cdot M+L=L+\sqrt{6}\cdot \cot(\beta)=\frac{\sqrt{6}\cdot (1 - \tan(\alpha))}{\tan(\alpha)\cdot \tan(\beta) - 1}+\sqrt{6}\cdot \cot(\beta)$, and $d(a_{31}, a_{22}')=\frac{\sqrt{3}}{\sqrt{2}}-M=\frac{\sqrt{6}}{2}\cdot (1-\cot(\beta))$. These are equations (3) and (4) respectively.

\textbf{Proof of equation (5):} Let $a_{31}'$ be the intersect of the extension of $(a_{21}, a_{22})$ onto $(d_2,a_{31})$, and let $a_{31}''$ be the unique point on $(a_{12},a_{31})$ such that $(a_{31}', a_{31}'')\perp (a_{12},a_{31})$; define $a_{12}'$ and $a_{12}''$ similarly (see Figure \ref{fig:angles_exaggerated}). 

Since $\angle c_3a_{22}a_{21}=2\pi - \alpha'$, we must have $\angle c_3a_{22}a_{22}''=\frac{3}{2}\pi - \alpha'$. By reflective symmetry on $(b_1, b_3)$, we also have $\angle a_{31}'a_{31}a_{31}''=\angle c_3a_{31}a_{22}''=\angle c_3a_{22}a_{22}''=\frac{3}{2}\pi - \alpha'$. Since the vertices $a_{31}'a_{31}''a_{22}''a_{22}$ clearly form a rectangle, we thus have $d(a_{31}'',a_{31}')=d(a_{22}'',a_{22})=\frac{\sqrt{3}}{\sqrt{2}}$. 

For simplicity, denote $N:=d(a_{31}, a_{31}'')$. Then using the above found values, we must have $\tan(\frac{3}{2}\pi - \alpha')=\frac{\sqrt{3}/\sqrt{2}}{N}$, meaning $N=\frac{\sqrt{6}}{2}\cdot \cot(\frac{3}{2}\pi - \alpha')$. Thus, we have $d(a_{31}',a_{22})=d(a_{31}'',a_{22}'')=\frac{\sqrt{3}}{\sqrt{2}}-N=\frac{\sqrt{6}}{2}\cdot (1-\cot(\frac{3}{2}\pi - \alpha'))$. This is equation (5).
\end{proof}

\begin{proof}(of Lemma \ref{lemma:e_i_fermatpoint})    
    Without loss of generality, consider the case $i=2$. We need to show that the three angles of the triangle $c_2d_2d_1$ are less than $2\pi/3$. Since it will be used below, recall that $p$ denotes the center of the entire net.

    Due to symmetry, the triangle $c_2d_2d_1$ is isosceles. Additionally, the triangle $c_2d_2d_1$ lies inside the triangle $pd_2d_1$ which is also isosceles. It follows that $\angle d_2c_2d_1>\angle d_2pd_1=\pi/2$. Therefore, the two equal angles of the triangle $c_2d_2d_1$ are $\angle d_2d_1c_2=\angle c_2d_2d_1=\frac12(2\pi-\angle d_2c_2d_1)<\pi/4<2\pi/3$ as required.

    It remains to show that $\angle d_2c_2d_1<3\pi/2$. This will follow from the following chain of (in)equalities:
    \begin{align*}
        \angle d_2c_2d_1\stackrel{(i)}{=}\angle a_{21}c_2a_{12}\stackrel{(ii)}{<}\angle a_{21}b_2a_{12}\stackrel{(iii)}{=}2\pi/3
    \end{align*}
    
    (i) is true since these two angles are opposite angles (remember that $c_2$ was defined as the intersection of the line segments $\overline{d_2a_{12}}$ and $\overline{d_1a_{21}}$).

    (ii) is true because $c_2$ lies on the line through $p$ and $b_2$ (by symmetry) and $d(c_2,p)>d(b_2,p)$. This inequality is due to the following: $\angle c_{2}a_{21}a_{22}=\alpha<\frac{13\pi}{12}$ by Lemma \ref{lemma:alpha}, and $\angle b_2a_{21}a_{22}=13\pi/12$ by construction. Therefore, $c_2$ lies outside the dodecagon, further away from the center $p$ than $b_2$.

    (iii) is true since we initially fixed this interior angle of the dodecagon to exactly this value.
\end{proof}

\bibliographystyle{amsalpha}
\bibliography{references}

\end{document}